\definecolor{teal}{rgb}{0.0,0.5,0.5}
\definecolor{frenchrose}{rgb}{0.96,0.29,0.54}
\definecolor{lg}{rgb}{0.36,0.99,0.82}
\definecolor{dblue}{rgb}{0,0,.5}
\definecolor{dpink}{cmyk}{.2,1,.1,.04}
\definecolor{purple}{rgb}{0.35,0.04,0.64}
\definecolor{borange}{rgb}{1, .388, 0}
\definecolor{dpurple}{rgb}{0.61,0.22,1.00}
\definecolor{purp}{rgb}{0.44,0.00,0.87}
\definecolor{green}{rgb}{0.00,0.44,0.00}
\definecolor{junebud}{rgb}{0.74,0.85,0.34}
\definecolor{plum}{rgb}{0.56,0.27,0.52}
\theoremstyle{plain}
\newtheorem{theorem}{Theorem}[section]
\newtheorem{definition}[theorem]{Definition}
\newtheorem{lemma}[theorem]{Lemma}
\newtheorem{remark}[theorem]{Remark}
\newtheorem{example}[theorem]{Example}
\def\CC{{\textmd \kern.24em \vrule width.02em height1.4ex depth-.05ex \kern-.26emC}}
\def\TagOnRight
\def\QQ{\rlap {\raise 0.4ex \hbox{$\scriptscriptstyle |$}} {\hskip -0.1em Q}}
\begin{document}
\begin{center}
  {{\bf \large {\rm {\bf $L2$-$1_{\sigma}$ Scheme on a Graded Mesh for a Multi-term Time-fractional Nonlocal Parabolic Problem }}}}
\end{center}
\begin{center}
	{\textmd {\bf Pari J. Kundaliya}}\footnote{\it Department of Mathematics,  Institute of Infrastructure, Technology, Research and Management, Ahmedabad, Gujarat, India, (pariben.kundaliya.pm@iitram.ac.in)}
	
\end{center}
\begin{abstract}
In this article, we propose numerical scheme for solving a multi-term time-fractional nonlocal parabolic partial differential equation (PDE). The scheme comprises $L2$-$1_{\sigma}$ scheme on a graded mesh in time and Galerkin finite element method (FEM) in space. We present the discrete fractional Gr$\ddot{{o}}$nwall inequality for $L2$-$1_{\sigma}$ scheme in case of multi-term time-fractional derivative, which is a multi-term analogue of~\cite[Lemma 4.1]{[r16]}. We derive \textit{a priori} bound and error estimate for the fully-discrete solution. The theoretical results are confirmed via numerical experiments. We should note that, though the way of proving the discrete fractional Gr$\ddot{{o}}$nwall inequality is similar to~\cite{[r5]}, the calculation parts are more complicated in this article.
\end{abstract}
{\bf Keywords:} Nonlocal; Multi-term time-fractional derivative; $L2$-$1_{\sigma}$ scheme; Graded mesh; Weak singularity.  \\

\noindent {\bf AMS(MOS):} 65M60, 65M15, 35R11.
\section{Introduction}\label{7intro}
Let $m \in \mathbb{N}$. Let $\mu_s > 0$ and $0~<~\alpha_{m}~<~\ldots~<~\alpha_1~<~1$, for~$s=1, \ldots, m$.  
 Consider the following time-fractional nonlocal diffusion equation:
 \begin{subequations}\label{7e1}
  	\begin{align}
  	\label{7cuc3:1.1}
  	\sum_{s=1}^{m}\, \mu_s \, ^{c}_{0}{D}^{\alpha_s}_{t} u(x,t) - a(l(u)) \: \Delta u(x,t) & =f(x,t) &  \mbox{in} & \quad \Omega\times(0,T], \\
  	\label{7cuc3:1.2}
   	u(x,t) & =0 &  \mbox{on} & \quad \partial\Omega\times(0,T],\\
   	\label{7cuc3:1.3}
  	u(x,0) & =u_0(x) &  \mbox{in} & \quad \Omega,
  	\end{align}
  \end{subequations}
  where $\Omega \subseteq \mathbb{R}^d$ $(d=1$ or $2)$ is a bounded domain with smooth boundary $\partial \Omega$, the function $a : \mathbb{R} \longrightarrow \mathbb{R}$ is a diffusion coefficient, and $l(u)= \int_{\Omega} u(x,t) \, dx.$ The term $^{c}_{0}{D}^{\alpha_s}_{t}u(x,t)$ is ${\alpha_s} ^{th}$ order Caputo fractional derivative of $u$, defined~\cite[Definition~3.1]{[r1]} as
  \begin{equation*}\label{7e2}
  \begin{split}
  {^{c}_{0}D^{\alpha_s}_{t}} u(x,t) =& \frac{1}{\Gamma (1-\alpha_s)}\int_{0}^{t}(t-\eta)^{-\alpha_s} \; \frac{\partial u(x,\eta)}{\partial \eta} \, d\eta , \;\; t>0.
  \end{split}
  \end{equation*}
  
  \vspace{15 pt}
  \noindent 
  Let us denote $^{c}_{0}{D}_{t} \, u := \sum_{s=1}^{m}\, \mu_s \, ^{c}_{0}{D}^{\alpha_s}_{t}u(x,t).$\\

If we choose the function $a(x) = 1$, then \eqref{7e1} reduces to the following multi-term time-fractional diffusion equation.
\begin{subequations}\label{7e4}
	\begin{align}
	\label{7cuc3:4.1}
	\sum_{s=1}^{m}\, \mu_s \, ^{c}_{0}{D}^{\alpha_s}_{t} u(x,t) - \Delta u(x,t) & =f(x,t) &  \mbox{in} & \quad \Omega\times(0,T], \\
	\label{7cuc3:4.2}
	u(x,t) & =0 &  \mbox{on} & \quad \partial\Omega\times(0,T],\\
	\label{7cuc3:4.3}
	u(x,0) & =u_0(x) &  \mbox{in} & \quad \Omega,
	\end{align}
\end{subequations}

It is claimed in~\cite{[Jin_2015]} that for the purpose of improving the modeling accuracy of the single-term model (resulting equation after choosing $m=1$ in \eqref{7e4}) for describing anomalous diffusion, \eqref{7e4} is developed. For example, using fractional dynamics for solute transport, a two-term fractional order diffusion model is proposed in~\cite{[Schumer_2003]} to distinguish explicitly mobile and immobile solutes. A kinetic equation involving two fractional derivatives of different orders also appears quite naturally when describing sub-diffusive motion in velocity fields~\cite{[Metzler_1998]}. 

In~\cite{[Jin_2015]}, \eqref{7e4} is solved using $L1$ scheme on a uniform mesh, which gives $O(\tau^{2-\alpha})$ convergence in time (where $\tau$ is the step size in temporal direction). In~\cite{[Alikhnov_2017]}, a second order accurate scheme in temporal direction namely $L2$-$1_{\sigma}$ scheme on a uniform mesh is developed to solve \eqref{7e4} numerically. The solution of time-fractional PDEs in general exhibits a weak singularity near time $t=0$~\cite{[kwn],[r02]}. This makes the existing numerical methods with a uniform time mesh often less accurate. The graded temporal meshes offer an efficient way of computing a reliable numerical solution near time $t=0$~\cite{[kwn]}. The $L1$ scheme on a graded mesh is used to solve \eqref{7e4} in~\cite{[Huang_2020]}~ and~\cite{[Huang_2022]}, which gives optimal convergence rate in case of weak singularity of a solution near $t=0$. The error analysis provided in~\cite{[Huang_2022]} is $\alpha_1$-robust; that is, the error bound remains valid if $\alpha_1$~$\rightarrow~{1}^{-}$~\cite{[r15]}. The $L2$-$1_{\sigma}$ scheme on a uniform mesh is used in~\cite{[z.Sun_2021]} to solve the multi-term time-fractional mixed diffusion and wave equation. In this paper, the fully-discrete $L2$-$1_{\sigma}$ scheme on a graded mesh is also given to solve the problem but its convergence analysis is not presented~\cite[Section 7]{[z.Sun_2021]}. 

In this work, we consider $L2$-$1_{\sigma}$ scheme on a graded mesh to find the numerical solution of \eqref{7e1}. To discretize a space variable, we use the Galerkin finite element method. In Section~\ref{7DFG}, we present the discrete fractional Gr$\ddot{{o}}$nwall inequality for the $L2$-$1_\sigma$ scheme on a graded mesh, which is an extension of~\cite[Lemma 4.1]{[r16]} to multi-term time-fractional derivative. We provide $\alpha_1$-robust \textit{a~priori} bound~(in Section~\ref{7bound}) and error estimate (in Section~\ref{7err_est}) for fully-discrete solution of \eqref{7e1}. 
	
Additionally, in this work, we assume that \eqref{7e1} has a unique solution with sufficient regularity (regularity assumption on $u$ is given in Section \ref{7err_est}), which is required for further analysis. We also need the following hypotheses on given data $a$, $f$ and $u_0$:
\begin{enumerate}
	\item[H1:] There exist $m_1, m_2 >0$ such that  $0<m_1 \le a(x) \le m_2 < \infty, \; \forall \, x \in \mathbb{R}.$
	\item[H2:] $a$ is Lipschitz continuous, i.e., $|a(x_1) - a(x_2)| \le L  |x_1-x_2|, \; (x_1, x_2 \in \mathbb{R})$.
	\item[H3:] $f \in L^{2}(0, T; L^2(\Omega))$ and $u_0 \in H_0^1(\Omega)\cap H^2(\Omega)$.
\end{enumerate}

\section{Fully-discrete Scheme}\label{7FDS}
In this section, we discretize the differential equation \eqref{7e1} in both space and time. Firstly, we write the weak formulation of problem \eqref{7e1} and it is given below.\\

\noindent 
Find $u(\cdot,t) \in  H^1_0(\Omega)$ for each $t \in(0,T]$ such that 
\begin{subequations}\label{7e3}
\begin{align}
(^{c}_{0}{D}_{t} \, u, v ) \, + \, a\big(l(u)\big) \, (\nabla u, \nabla v) & =  \big(f, v\big),&   &\forall v \in H^1_0(\Omega),&  & & \\ 
\big(u(\cdot, 0), v\big) & =  \big(u_0 (\cdot), v\big), &  &\mbox{in} \; \, \Omega.&   & & 
\end{align}
\end{subequations}

In order to establish a fully-discrete scheme, we assume that $\tau_h$ is a quasi uniform partition of $\Omega$ into disjoint intervals in $\mathbb{R}^1$ or triangles in $\mathbb{R}^2$ and $h$ be the mesh size. We define $V_h$ as a finite dimensional subspace of $H^{1}_{0}(\Omega)$, with dimension $M$, consisting of continuous piecewise linear polynomials.

Let $\tau_{t_n} := \Big\{ t_n : t_n = T\Big(\frac{n}{N}\Big)^{r}, \, n=0, 1,..., N\Big\}$ be non-uniform partition of $[0,T]$ with time step $\tau_n = t_n - t_{n-1}$, for $n=1,2,...,N$ and mesh grading parameter $r \ge 1$. The step size $\tau_n$ and grid point $t_n$ satisfy the following estimates~\cite[Eq (18) and (19)]{[fmr13]}:
\begin{equation}\label{7e20}
\begin{split}
\tau_n \le C \, r \, T \,  N^{-r} n^{r-1} \, \le \, C \, r \, T^{\frac{1}{r}} \, N^{-1} \, t_n^{1 - \frac{1}{r}}, \quad \mbox{for} \; 1 \le n \le N, \\
\end{split}
\end{equation}

and 
\begin{equation}\label{7e20a}
\begin{split}
t_n \le 2^r \, t_{n-1}, \quad \mbox{for} \; 2 \le n \le N. \\
\end{split}
\end{equation}

From the estimate \eqref{7e20a}, it follows that there is a constant $C_r > 0,$ independent of the step size $\tau_n$ such that
\begin{equation}\label{7e20b}
\begin{split}
\tau_{n-1} \le \tau_n  \le C_r \, \tau_{n-1}, \quad \mbox{for} \; 2 \le n \le N. \\
\end{split}
\end{equation}

Now, for each $n=1, \ldots, N$, we define the function
\begin{eqnarray}\label{7e125}
   \mathcal{G}_n(\sigma) = \sum_{s=1}^{m} \frac{\mu_s}{\Gamma({3-\alpha_s})} \sigma^{1-\alpha_s} \big[\sigma - \big(1-\frac{\alpha_s}{2}\big)\big] \tau_{n}^{2-\alpha_s}, \qquad \sigma \ge 0, \nonumber
\end{eqnarray}
and take
\begin{eqnarray}
 b_1 & = & \min_{1 \le s \le m} \big\{ 1- \frac{\alpha_s}{2}\big\}  \; = \; 1 - \frac{\alpha_1}{2} \; \ge \;  \frac{1}{2}, \nonumber \\
 b_2 & = & \max_{1 \le s \le m} \big\{ 1- \frac{\alpha_s}{2}\big\}  \; =  \; 1 - \frac{\alpha_m}{2}  \; \le \;  1. \nonumber
\end{eqnarray}
As given in~\cite[Lemma 2.1]{[Alikhnov_2017]}, $\mathcal{G}_n(\sigma)=0$ has a unique root (say $\sigma_n^{\star}$) in $[b_1, b_2]$ depending upon $n$. Using Newton's method and by taking $b_2$ as an initial guess, one can find $\sigma_n^{\star}$.  

Let $\sigma_n = 1 - \sigma_n^{\star}$. For any function $v \in C[0,T] \cap C^3(0,T],$ 
we write $v(t_n):=v^n.$ For $n=1, \ldots, N-1$, let $L_{2,n} v(\eta)$ be the quadratic polynomial that interpolates to $v(\eta)$ at the points $t_{n-1}, t_n$ and $t_{n+1}$. Also, for simplicity of writing, we introduce the following notations. For each $n = 1, \ldots, N$
\begin{eqnarray}
	 t_{n-\sigma_n} & := & \, t_{n-1} \, + \, (1 - \sigma_n) \, \tau_n \, = \, (1 - \sigma_n) \, t_n \, + \,  \sigma_n \, t_{n-1} \nonumber \\
	 v^{n, \sigma_n} & := & \, (1 - \sigma_n) \, v^n \, + \,  \sigma_n \, v^{n-1} \nonumber \\
	 \delta_t v^n & := & \, \frac{v^n - v^{n-1}}{\tau_n} \nonumber \\
 \hspace{-38pt} \text{and for each $n = 2, \ldots, N$} \nonumber \\ 
	 \widehat{v}^{n - \sigma_n} & := & \frac{\tau_{n-1} \, + \, (1 - \sigma_n) \, \tau_n}{\tau_{n-1}} \, v^{n-1} \, - \, \frac{(1 - \sigma_n) \, \tau_n}{\tau_{n-1}} \, v^{n-2} \nonumber 
\end{eqnarray}

Now, for each $s=1,2,...,m,$ the $L2$-$1_{\sigma}$ approximation to $^c_0D^{\alpha_s}_{ t_{n-\sigma_n}}v$ on the graded mesh~\cite[P.5]{[r16]} is given below. 

For $n = 1, \ldots, N,$
\begin{eqnarray}\label{7e39}
^c_0D^{\alpha_s}_{ t_{n-\sigma_n}}v &=&  \frac{1}{\Gamma{(1-\alpha_s)}} \, \int_{0}^{t_{n-\sigma_n}} \, (t_{n-\sigma_n} - \eta)^{- \alpha_s} \, v'(\eta) \, d\eta \nonumber \\
&=&  \frac{1}{\Gamma{(1-\alpha_s)}} \, \sum_{j=1}^{n-1} \int_{t_{j-1}}^{t_j} \,  (t_{j-\sigma_n} - \eta)^{- \alpha_s} \, v'(\eta) \, d\eta \nonumber \\
& & + \, \frac{1}{\Gamma{(1-\alpha_s)}} \, \int_{t_{n-1}}^{t_{n-\sigma_n}} \, (t_{n-\sigma_n} - \eta)^{- \alpha_s} \, \delta_t v^n \, d\eta \nonumber \\
&=& \sum_{j=1}^{n} g^{(\alpha_s, \sigma_n)}_{n,j} (v^j - v^{j-1})\nonumber \\
&=& \: g^{(\alpha_s, \sigma_n)}_{n,n} \, v^n - g^{(\alpha_s, \sigma_n)}_{n,1} \, v^0 - \sum_{j=2}^{n} \, \big( g^{(\alpha_s, \sigma_n)}_{n,j} - g^{(\alpha_s, \sigma_n)}_{n,j-1}\big) \, v^{j-1} \nonumber \\
&:=& D^{\alpha_s}_{N} v^{n- \sigma_n}.
\end{eqnarray}

Here $g^{(\alpha_s, \sigma_n)}_{1,1} = \tau_1^{-1} \, a^{(\alpha_s, \sigma_n)}_{1,1}$ and for $n \ge 2,$
\begin{eqnarray}\label{7e40}
g^{(\alpha_s, \sigma_n)}_{n,j}= \left\{\begin{array}{lll} 
\tau_j^{-1} \, \big(a^{(\alpha_s, \sigma_n)}_{n,1}-b^{(\alpha_s, \sigma_n)}_{n,1}\big) & (\text{if } j=1)  \vspace{5pt} \\ 
\tau_j^{-1} \, \big(a^{(\alpha_s, \sigma_n)}_{n,j}+b^{(\alpha_s, \sigma_n)}_{n,j-1}-b^{(\alpha_s, \sigma_n)}_{n,j}\big) & (\text{if } \; 2 \le j \le n-1) \vspace{5pt} \\  
\tau_j^{-1} \, \big(a^{(\alpha_s, \sigma_n)}_{n,n}+b^{(\alpha_s, \sigma_n)}_{n,n-1}\big) & (\text{if } j=n),
\end{array}
\right.
\end{eqnarray}
where
\begin{eqnarray}
a^{(\alpha_s, \sigma_n)}_{n,n} \! & = & \! \frac{1}{\Gamma{(1-\alpha_s)}} \, \int_{t_{n-1}}^{t_{n-\sigma_n}} \, (t_{n-\sigma_n} - \eta)^{- \alpha_s} \, d\eta, \nonumber \\
\text{and for $1 \le j \le n-1$,} \nonumber \\
a^{(\alpha_s, \sigma_n)}_{n,j} \! & = & \! \frac{1}{\Gamma{(1-\alpha_s)}} \, \int_{t_{j-1}}^{t_j} \, (t_{n-\sigma_n} - \eta)^{- \alpha_s} \, d\eta, \nonumber \\
b^{(\alpha_s, \sigma_n)}_{n,j} \! & = & \! \frac{2}{\Gamma{(1-\alpha_s)} (t_{j+1} - t_{j-1})} \int_{t_{j-1}}^{t_j} (t_{n-\sigma_n} - \eta)^{- \alpha_s} (\eta - t_{j-\frac{1}{2}}) \, d\eta. \nonumber 
\end{eqnarray}

\vspace{20pt}
 \indent 
Additionally, the following two results hold for $g^{(\alpha_s, \sigma_n)}_{n,j}$~\cite[Example 3, P.222]{[r5]}.
\begin{eqnarray}\label{7e7a}
  g^{(\alpha_s, \sigma_n)}_{n,n} \, \ge \, g^{(\alpha_s, \sigma_n)}_{n,n-1} \, \ge  \ldots \ge \, g^{(\alpha_s, \sigma_n)}_{n,2} \, \ge \, g^{(\alpha_s, \sigma_n)}_{n,1} \, > \, 0, \quad \mbox{for} \: 1 \le n \le N,
\end{eqnarray}
and if $\max\limits_{2 \le n \le N} \frac{\tau_{n-1}}{\tau_n} \, \le \frac{7}{4}$, then
\begin{eqnarray}\label{7e7b}
g^{(\alpha_s, \sigma_n)}_{n,j} \, \ge \, \frac{4}{11 \, \tau_j} \, \int_{t_{j-1}}^{t_j} \frac{(t_n - \eta)^{- \alpha_s}}{\Gamma (1 - \alpha_s)} \, d\eta, \quad \mbox{for} \: 1 \le j \le n \le N.
\end{eqnarray}

Now, for each $n=1, \ldots, N$ and $j=1, \ldots, n,$ if we denote
\begin{eqnarray}\label{7e126}
  g_{n,j} := \sum_{s=1}^{m} \mu_s \, g^{(\alpha_s, \sigma_n)}_{n,j}, 
\end{eqnarray}
then 
\begin{eqnarray}\label{7e127}
^{c}_{0}{D}_{t_{n- \sigma_n}}v & \approx & \sum_{j=1}^{n} g_{n,j} (v^j - v^{j-1}) \nonumber \\
& = &  g_{n,n} \, v^n - g_{n,1} \, v^{0} - \sum_{j=2}^{n} \, \big( g_{n,j} - g_{n,j-1}\big) \, v^{j-1} \nonumber \\
& := & D_{N} \, v^{n- \sigma_n}. 
\end{eqnarray}

Moreover, as a result of \eqref{7e7a} and \eqref{7e7b}, we have the following two properties for $g_{n,j}$.
\begin{enumerate}
  \item $g_{n,j}$ are positive and monotone.
        \begin{eqnarray}\label{7e8a}
           g_{n,n} \, \ge \, g_{n,n-1} \, \ge  \ldots \ge \, g_{n,2} \, \ge \, g_{n,1} \, > \, 0, \quad \mbox{for} \: 1 \le n \le N,
        \end{eqnarray}
  \item If $\max\limits_{2 \le n \le N} \frac{\tau_{n-1}}{\tau_n} \, \le \frac{7}{4}$ and let $\mu = \min\limits_{1 \le s \le m} \frac{\mu_s}{\Gamma(2 - \alpha_s)}$, then for $1 \le j \le n \le N$,
  \begin{eqnarray}\label{7e8b}
  g_{n,j} \; \ge \; \frac{4}{11 \, \tau_j} \, \sum_{s=1}^{m} \, \mu_s \int_{t_{j-1}}^{t_j} \frac{(t_n - \eta)^{- \alpha_s}}{\Gamma (1 - \alpha_s)} \, d\eta  \;  \ge \; \frac{4 \, \mu}{11} \, \tau_j^{- \alpha_1}. 
  \end{eqnarray}
\end{enumerate}

To find the discretization error $^{c}_{0}{D}_{t_{n- \sigma_n}}v - D_{N} \, v^{n- \sigma_n}$, for each $\alpha_s \, (s=1,2,...,m)$ we define
\begin{subequations}\label{7e128}
 	\begin{align}
 	\label{7e128:1.1}
 	   \psi^{1 - \sigma_1}_{\alpha_s} & = \tau_1^{\alpha_s} \sup_{\eta \in (0,t_1)} \big(\eta^{1-\alpha_s} |\delta_t v(t_1) - v'(\eta)| \big),&  & \mbox{if} &   & n=1, &  \\
 	\label{7e128:1.2}
       \psi^{n-\sigma_n}_{\alpha_s} & =  \tau_n^{3-\alpha_s} \, t_{n-\sigma_n}^{\alpha_s} \sup_{\eta \in (t_{n-1},t_n)} |v^{(3)}(\eta)|,&  & \mbox{if} &  &n=2, \ldots, N, &  \\
 	\label{7e128:1.3}
 	    \psi^{n,1}_{\alpha_s} & =  \tau_1^{\alpha_s} \sup_{\eta \in (0,t_1)} \big(\eta^{1-\alpha_s} | (L_{2,1} v(\eta))' - v'(\eta)| \big), & & \mbox{if}&  & n=2, \ldots, N, & \\
 	\label{7e128:1.4}
 	    \psi^{n,j}_{\alpha_s} & =  \tau_n^{-\alpha_s} \, \tau_j^2 (\tau_j + \tau_{j+1}) \, t_j^{\alpha_s} \! \sup_{\eta \in (t_{j-1},t_j)} |v^{(3)}(\eta)|, & & \mbox{if} & & 2 \le j < n \le N. & 
 	\end{align}
 \end{subequations}

\begin{lemma}\label{7l6a}
	\cite[Lemma 7]{[fmr13]} Let $v(t) \in C[0,T] \cap C^3(0,T]$ be a function with $|v^{(q)}(t)| \le C (1 + t^{\alpha_1 - q}),$ for $q=0,1,2,3$ and for $t \in (0,T],$ then 
	\begin{eqnarray}\label{7e129}
	   \psi^{n-\sigma_n}_{\alpha_s}  & \le &  C N^{-\min\{3-\alpha_s, r \alpha_1\}}, \quad  \mbox{for} \; \, n=1, \ldots, N, \nonumber \\
	   \psi^{n,j}_{\alpha_s} & \le &  C N^{-\min\{3-\alpha_s, r \alpha_1\}}, \quad \mbox{for} \; \,  j=1, \ldots, n-1, \; \mbox{when} \; n \ge 2. \nonumber
	\end{eqnarray}		
\end{lemma}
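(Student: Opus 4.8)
The plan is to split the four families of quantities in \eqref{7e128} into the two ``first--interval'' ones, $\psi^{1-\sigma_1}_{\alpha_s}$ and $\psi^{n,1}_{\alpha_s}$, where the regularity bound $|v^{(3)}(t)|\le C(1+t^{\alpha_1-3})$ is not integrable at $t=0$ and so cannot be used directly, and the ``interior'' ones, $\psi^{n-\sigma_n}_{\alpha_s}$ for $n\ge2$ and $\psi^{n,j}_{\alpha_s}$ for $2\le j<n$, where $v^{(3)}$ is available. Throughout I would use only the mesh facts already recorded: $t_n=T(n/N)^r$, the upper estimate $\tau_n\le CrTN^{-r}n^{r-1}$ of \eqref{7e20}, the quasi-uniformity \eqref{7e20a}--\eqref{7e20b}, and the matching lower estimate $\tau_n\ge cTN^{-r}(n-1)^{r-1}$ for $n\ge2$, which comes from $t_n-t_{n-1}=TN^{-r}\bigl(n^r-(n-1)^r\bigr)$ and the mean value theorem.

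For the first-interval terms, recall $\tau_1=t_1=TN^{-r}$. From $|v'(s)|\le C(1+s^{\alpha_1-1})$ I get $|\delta_t v(t_1)|=\bigl|\tfrac1{t_1}\int_0^{t_1}v'(s)\,ds\bigr|\le Ct_1^{\alpha_1-1}$, and the same argument for $\delta_t v^2$ together with $|v''(s)|\le C(1+s^{\alpha_1-2})$ gives, after comparing $t_1$ with $t_2$, the bound $\bigl|[t_0,t_1,t_2]v\bigr|\le Ct_1^{\alpha_1-2}$ for the second divided difference, whence $|(L_{2,1}v)'(\eta)|\le Ct_1^{\alpha_1-1}$ on $(0,t_1)$ by the Newton form of the interpolant. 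Since also $|v'(\eta)|\le C(1+\eta^{\alpha_1-1})\le C\eta^{\alpha_1-1}$ for $\eta\le t_1$, both $|\delta_t v(t_1)-v'(\eta)|$ and $|(L_{2,1}v(\eta))'-v'(\eta)|$ are $\le C\eta^{\alpha_1-1}$ on $(0,t_1)$. Multiplying by the weight $\eta^{1-\alpha_s}$ and using $\alpha_1\ge\alpha_s$ with $\eta\le t_1$ gives the key estimate $\eta^{1-\alpha_s}\cdot(\text{the above})\le Ct_1^{\alpha_1-\alpha_s}$, so that $\psi^{1-\sigma_1}_{\alpha_s}$ and $\psi^{n,1}_{\alpha_s}$ are at most $\tau_1^{\alpha_s}\cdot Ct_1^{\alpha_1-\alpha_s}=Ct_1^{\alpha_1}=CN^{-r\alpha_1}\le CN^{-\min\{3-\alpha_s,\,r\alpha_1\}}$.

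For the interior terms I would first note $\sup_{\eta\in(t_{j-1},t_j)}|v^{(3)}(\eta)|\le C(1+t_{j-1}^{\alpha_1-3})\le Ct_j^{\alpha_1-3}$, using $1\le T^{3-\alpha_1}t_{j-1}^{\alpha_1-3}$ (valid since $t_{j-1}\le T$) and $t_{j-1}\ge 2^{-r}t_j$ from \eqref{7e20a}. Feeding this, $\tau_{j+1}\le C_r\tau_j$, $t_{n-\sigma_n}\le t_n$, and the mesh power estimates into \eqref{7e128} yields $\psi^{n-\sigma_n}_{\alpha_s}\le C\tau_n^{3-\alpha_s}t_n^{\alpha_s+\alpha_1-3}\le CN^{-r\alpha_1}n^{\,r\alpha_1-(3-\alpha_s)}$ for $n\ge2$, and $\psi^{n,j}_{\alpha_s}\le C\tau_n^{-\alpha_s}\tau_j^{3}t_j^{\alpha_s+\alpha_1-3}\le CN^{-r\alpha_1}j^{\,r\alpha_1-(3-\alpha_s)}$ for $2\le j<n$; in this last chain the sharp lower estimate on $\tau_n$ combined with $n-1\ge j$ is exactly what makes all powers of $N$ collapse to $-r\alpha_1$ and leaves the single surviving exponent $r\alpha_1-(3-\alpha_s)$ (the cruder bound $\tau_n^{-\alpha_s}\le CN^{r\alpha_s}$ would not suffice when $r>1$). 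Finally, since $n,j\le N$, I distinguish the sign of $r\alpha_1-(3-\alpha_s)$: if it is nonnegative then $n^{\,r\alpha_1-(3-\alpha_s)}\le N^{\,r\alpha_1-(3-\alpha_s)}$ gives $CN^{-(3-\alpha_s)}$, and if it is negative then $n^{\,r\alpha_1-(3-\alpha_s)}\le1$ gives $CN^{-r\alpha_1}$; in both cases one obtains $CN^{-\min\{3-\alpha_s,\,r\alpha_1\}}$, which closes all four estimates. The only genuinely delicate step is the first-interval analysis --- keeping the weight $\eta^{1-\alpha_s}$ under control while substituting bounds on $v'$ and $v''$ for the unavailable $v^{(3)}$; everything else is careful bookkeeping of exponents on the graded mesh.
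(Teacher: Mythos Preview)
The paper does not actually prove this lemma: it merely cites \cite[Lemma~7]{[fmr13]} and states the result. So there is no ``paper's own proof'' to compare against --- your attempt is supplying what the paper deliberately omitted.

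Your argument is correct and is essentially the one in the cited reference, adapted to the multi-term setting (where $\alpha_s$ replaces the single $\alpha$). The four-way split into first-interval terms (where $v^{(3)}$ is unusable near $t=0$ and one falls back on $v'$, $v''$ bounds) versus interior terms (where $|v^{(3)}(\eta)|\le Ct_j^{\alpha_1-3}$ is available via \eqref{7e20a}) is exactly the standard route. Your exponent bookkeeping is right: in particular, your remark that the sharp lower bound $\tau_n\ge cTN^{-r}(n-1)^{r-1}$ is genuinely needed for $\psi^{n,j}_{\alpha_s}$ when $r>1$ --- because the crude estimate $\tau_n^{-\alpha_s}\le CN^{r\alpha_s}$ would leave an extra factor $j^{(r-1)\alpha_s}$ and spoil the rate --- is a good observation. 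One minor point: your appeal to $|v''|$ when bounding the second divided difference $[t_0,t_1,t_2]v$ is not strictly necessary; since $|\delta_t v^1|,|\delta_t v^2|\le Ct_1^{\alpha_1-1}$ and $t_2\ge t_1$, one gets $|[t_0,t_1,t_2]v|\le Ct_1^{\alpha_1-1}/t_2\le Ct_1^{\alpha_1-2}$ directly. Either way the conclusion $|(L_{2,1}v)'(\eta)|\le Ct_1^{\alpha_1-1}$ on $(0,t_1)$ stands, and the rest follows.
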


Now, in the following theorem, we derive the estimate for $\|^{c}_{0}{D}_{t_{n- \sigma_n}}v - D_{N} \, v^{n- \sigma_n}\|$. The proof is slight modification of the proof~\cite[Lemma 1]{[fmr13]}.
\begin{theorem}\label{7l6}
	 Let $v(t) \in C[0,T] \cap C^3(0,T]$ be a function with $|v^{(q)}(t)| \le C (1 + t^{\alpha_1 - q}),$ for $q=0,1,2,3$ and for $t \in (0,T],$ then for each $n= 1, \ldots, N$,
	\begin{eqnarray}\label{7e130}
	 \| ^c_0D_{ t_{n-\sigma_n}}v - D_{N} \, v^{n- \sigma_n} \| \, \le C \, N^{-\min \{3- \alpha_1, \, r \alpha_1\}} \, \sum_{s=1}^{m} \frac{\mu_s}{\Gamma{(1-\alpha_s)}} \, t^{-\alpha_s}_{n - \sigma_n}.
	\end{eqnarray}		
\end{theorem}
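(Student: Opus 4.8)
The plan is to reduce the multi-term estimate to the single-term consistency bounds already established for the $L2$-$1_\sigma$ scheme on a graded mesh, and then sum over $s$. By definition \eqref{7e127}, $D_N v^{n-\sigma_n} = \sum_{s=1}^m \mu_s\, D^{\alpha_s}_N v^{n-\sigma_n}$, and by the definition of the multi-term Caputo operator, $^{c}_{0}{D}_{t_{n-\sigma_n}}v = \sum_{s=1}^m \mu_s\, {}^{c}_{0}D^{\alpha_s}_{t_{n-\sigma_n}}v$. Hence, by the triangle inequality,
\begin{equation*}
\bigl\| {}^c_0D_{t_{n-\sigma_n}}v - D_N v^{n-\sigma_n}\bigr\| \;\le\; \sum_{s=1}^m \mu_s \,\bigl\| {}^{c}_{0}D^{\alpha_s}_{t_{n-\sigma_n}}v - D^{\alpha_s}_N v^{n-\sigma_n}\bigr\|.
\end{equation*}
So it suffices to bound each single-term truncation error $\bigl\| {}^{c}_{0}D^{\alpha_s}_{t_{n-\sigma_n}}v - D^{\alpha_s}_N v^{n-\sigma_n}\bigr\|$ by $C\, N^{-\min\{3-\alpha_1,\,r\alpha_1\}}\,\frac{1}{\Gamma(1-\alpha_s)}\,t_{n-\sigma_n}^{-\alpha_s}$, and then the claimed bound follows immediately (using $3-\alpha_s \ge 3-\alpha_1$ inside the $\min$, which is why only $\alpha_1$ appears in the exponent).

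For the single-term bound I would follow the argument of \cite[Lemma 1]{[fmr13]} essentially verbatim, since the coefficients $g^{(\alpha_s,\sigma_n)}_{n,j}$, the nodes $t_{n-\sigma_n}$, and the interpolation operators $L_{2,n}$ are exactly those of the classical $L2$-$1_\sigma$ scheme on a graded mesh. The key steps are: (i) split the truncation error into a ``first interval'' piece involving the linear interpolant on $(0,t_1)$ and a ``remaining intervals'' piece where $v$ is replaced by the quadratic interpolant $L_{2,j}v$ on each $(t_{j-1},t_j)$; (ii) write these pieces in terms of the quantities $\psi^{n-\sigma_n}_{\alpha_s}$, $\psi^{n,1}_{\alpha_s}$, $\psi^{n,j}_{\alpha_s}$ defined in \eqref{7e128}, using standard interpolation error identities and the fact that $\int_{t_{j-1}}^{t_j}(t_{n-\sigma_n}-\eta)^{-\alpha_s}\,d\eta$ controls the relevant weights; (iii) sum the contributions and extract a factor $t_{n-\sigma_n}^{-\alpha_s}/\Gamma(1-\alpha_s)$ — one uses that $\sum_j$ of the geometric-type weights telescopes or is dominated by the total singular weight on $(0,t_{n-\sigma_n})$, which evaluates to a constant times $t_{n-\sigma_n}^{1-\alpha_s}$, and then the extra power $t_{n-\sigma_n}^{-1}$ combines with one factor; (iv) invoke Lemma~\ref{7l6a} to replace every $\psi^{\bullet}_{\alpha_s}$ by $C\,N^{-\min\{3-\alpha_s,\,r\alpha_1\}} \le C\,N^{-\min\{3-\alpha_1,\,r\alpha_1\}}$.

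The main obstacle — and the reason this is not completely trivial — is the bookkeeping in step (iii): one must show that after weighting the local interpolation errors $\psi^{n,j}_{\alpha_s}$ by the coefficient differences $g^{(\alpha_s,\sigma_n)}_{n,j}-g^{(\alpha_s,\sigma_n)}_{n,j-1}$ (equivalently, by $\tau_j^{-\alpha_s}$-type factors absorbed into the definition of $\psi^{n,j}_{\alpha_s}$) and summed over $j=1,\dots,n$, the result is uniformly bounded by $C\,N^{-\min\{3-\alpha_1,r\alpha_1\}}\,t_{n-\sigma_n}^{-\alpha_s}/\Gamma(1-\alpha_s)$; this requires the graded-mesh estimates \eqref{7e20}, \eqref{7e20a}, \eqref{7e20b} together with a careful comparison of $t_j$, $t_{n-\sigma_n}$ and the summed singular kernel. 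Because the coefficients $g_{n,j}$ here are the genuine $L2$-$1_\sigma$ weights for the individual $\alpha_s$ (the multi-term structure only enters through the \emph{choice} of $\sigma_n$ via $\mathcal{G}_n(\sigma_n^\star)=0$, which does not affect the per-$\alpha_s$ consistency argument), this step is identical to the single-term graded-mesh analysis and I would simply cite it, noting only that $\sigma_n\in[b_1,b_2]\subset[1/2,1]$ keeps all constants uniform in $n$ and in $\alpha_1\to1^-$, which is what makes the bound $\alpha_1$-robust.
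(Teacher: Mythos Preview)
Your triangle-inequality reduction to single-term estimates has a genuine gap that would cost you exactly one order of accuracy. The issue is in the last-interval piece
\[
\mathcal{B}_s \;=\; \frac{1}{\Gamma(1-\alpha_s)}\int_{t_{n-1}}^{t_{n-\sigma_n}}(t_{n-\sigma_n}-\eta)^{-\alpha_s}\bigl(\delta_t v^n - v'(\eta)\bigr)\,d\eta .
\]
Taylor expansion about $t_{n-1/2}$ produces, besides the $O(\tau_n^{3-\alpha_s}\sup|v'''|)$ terms that feed into $\psi^{n-\sigma_n}_{\alpha_s}$, a \emph{second}-order contribution
\[
-\,v''(t_{n-1/2})\,\frac{1}{\Gamma(1-\alpha_s)}\int_{t_{n-1}}^{t_{n-\sigma_n}}(t_{n-\sigma_n}-\eta)^{-\alpha_s}(\eta-t_{n-1/2})\,d\eta
\;=\;-\,v''(t_{n-1/2})\,\frac{(\sigma_n^\star)^{1-\alpha_s}\tau_n^{2-\alpha_s}}{\Gamma(3-\alpha_s)}\Bigl[\sigma_n^\star-\bigl(1-\tfrac{\alpha_s}{2}\bigr)\Bigr].
\]
In the genuine single-term $L2$-$1_\sigma$ analysis of \cite{[fmr13]} this bracket vanishes because there $\sigma^\star=1-\alpha/2$. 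Here, however, $\sigma_n^\star$ is the root of the \emph{aggregate} function $\mathcal{G}_n$, so for each individual $s$ the bracket is $O(1)$ and the term is of order $\tau_n^{2-\alpha_s}$, not $\tau_n^{3-\alpha_s}$. For $n$ bounded away from $0$ this gives only $N^{-(2-\alpha_s)}$, which is strictly weaker than the claimed $N^{-\min\{3-\alpha_1,\,r\alpha_1\}}$ whenever $r\alpha_1>2-\alpha_1$. Thus your statement that ``the choice of $\sigma_n$ \ldots\ does not affect the per-$\alpha_s$ consistency argument'' is precisely where the proof breaks.

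The paper's remedy is not to split: it keeps $\mathcal{B}=\sum_s \mu_s\,\mathcal{B}_s$ intact, computes the second-order integrals together, and observes that their $\mu_s$-weighted sum is exactly $\mathcal{G}_n(\sigma_n^\star)=0$ (this is eq.~\eqref{7e138}). Only the third-order remainders survive, and those are bounded via $\psi^{n-\sigma_n}_{\alpha_s}$ and Lemma~\ref{7l6a}. The ``past intervals'' part $\mathcal{A}_s$ can indeed be treated term-by-term as you propose, since the quadratic interpolant $L_{2,j}$ already kills the second-order error there regardless of $\sigma_n$. So your decomposition is salvageable if you handle the last interval jointly in $s$; a pure per-$\alpha_s$ citation of \cite[Lemma~1]{[fmr13]} is not enough. (Minor point: $\sigma_n^\star\in[b_1,b_2]$, so $\sigma_n=1-\sigma_n^\star\in[\alpha_m/2,\alpha_1/2]\subset(0,1/2)$, not $[1/2,1]$.)
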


\begin{proof} 
Let $e^{n- \sigma_n}:= \, ^c_0D_{ t_{n- \sigma_n}}v - D_{N} \, v^{n- \sigma_n}.$ Then we have
\begin{eqnarray}\label{7e131}
  e^{n- \sigma_n} & = & \sum_{s=1}^{m} \frac{\mu_s}{\Gamma(1-\alpha_s)} \sum_{j=1}^{n-1} \int_{t_{j-1}}^{t_j} (t_{n- \sigma_n} - \eta)^{-\alpha_s} \big(L_{2,j} v(\eta) - v(\eta)\big)' d\eta \nonumber \\
  & & + \, \sum_{s=1}^{m} \frac{\mu_s}{\Gamma(1-\alpha_s)} \int_{t_{n-1}}^{t_{n- \sigma_n}} (t_{n- \sigma_n} - \eta)^{-\alpha_s} \big(\delta_t v(t_n) - v'(\eta)\big) d\eta.
\end{eqnarray}

 For $n=1$,
\begin{eqnarray}\label{7e132}
 | e^{1 - \sigma_1} |& = & \, \Big| \sum_{s=1}^{m} \frac{\mu_s}{\Gamma(1-\alpha_s)} \int_{t_0}^{t_{1 - \sigma_1}} (t_{1 - \sigma_1} - \eta)^{-\alpha_s} \big(\delta_t v(t_1) - v'(\eta)\big) d\eta \Big| \nonumber \\
 & \le & \sum_{s=1}^{m} \frac{\mu_s}{\Gamma(1-\alpha_s)} \sup_{\eta \in (0,t_1)} \! \! \big(\eta^{1-\alpha_s} |\delta_t v(t_1) - v'(\eta)| \big) \! \! \int_{t_0}^{t_{1 - \sigma_1}} \! (t_{1 - \sigma_1} - \eta)^{-\alpha_s} \, \eta^{\alpha_s - 1} d\eta \nonumber \\
 & \le & \sum_{s=1}^{m} \frac{\mu_s}{\Gamma(1-\alpha_s)} \, \psi^{1 - \sigma_1}_{\alpha_s} \, t_{1 - \sigma_1}^{-\alpha_s} \, \Gamma(1-\alpha_s) \, \Gamma(\alpha_s). 
\end{eqnarray}

When $2 \le n \le N$, \\

\noindent For each $s=1,2,...,m$, let us write
\begin{eqnarray}\label{7e133}
\mathcal{A}_s = \sum_{j=1}^{n-1} \int_{t_{j-1}}^{t_j} (t_{n- \sigma_n} - \eta)^{-\alpha_s} \big(L_{2,j} v(\eta) - v(\eta)\big)' d\eta.
\end{eqnarray}

\noindent Then by following steps given in the proof of~\cite[Lemma 1]{[fmr13]} and using the interpolation error estimate for the term $\big(L_{2,j} v(\eta) - v(\eta)\big)$, one can show that
\begin{eqnarray}\label{7e134}
| \mathcal{A}_s | \le  t_{n- \sigma_n}^{-\alpha_s} \, \max_{1 \le j \le n-1} \psi^{n,j}_{\alpha_s}.
\end{eqnarray}

\noindent Now, we denote 
\begin{eqnarray}\label{7e135}
\mathcal{B} = \sum_{s=1}^{m} \frac{\mu_s}{\Gamma(1-\alpha_s)} \int_{t_{n-1}}^{t_{n- \sigma_n}} (t_{n- \sigma_n} - \eta)^{-\alpha_s} \big(\delta_t v(t_n) - v'(\eta)\big) d\eta.
\end{eqnarray}

\noindent Therefore, using the Taylor's theorem we can have
\begin{eqnarray}\label{7e136}
\mathcal{B} & = & \sum_{s=1}^{m} \frac{\mu_s}{\Gamma(1-\alpha_s)} \Bigg[ \int_{t_{n-1}}^{t_{n- \sigma_n}} (t_{n- \sigma_n} - \eta)^{-\alpha_s} \big(\delta_t v(t_n) - v'(t_{n-\frac{1}{2}})\big) d\eta \nonumber \\
& & - \: v''(t_{n-\frac{1}{2}}) \, \int_{t_{n-1}}^{t_{n- \sigma_n}} (t_{n- \sigma_n} - \eta)^{-\alpha_s} (\eta - t_{n-\frac{1}{2}}) d\eta \nonumber \\
& & - \: \frac{1}{2} \int_{t_{n-1}}^{t_{n- \sigma_n}} (t_{n- \sigma_n} - \eta)^{-\alpha_s} (\eta - t_{n-\frac{1}{2}})^2 \, v'''(\zeta_n(\eta)) d\eta \Bigg],
\end{eqnarray}
where $\zeta_n(\eta) \in (t_{n-1}, t_{n- \sigma_n}).$ \\

\noindent An application of Taylor's theorem into the first term of R.H.S. of \eqref{7e136} gives
\begin{eqnarray}\label{7e137}
& & \Big| \sum_{s=1}^{m} \frac{\mu_s}{\Gamma(1-\alpha_s)}  \int_{t_{n-1}}^{t_{n- \sigma_n}} (t_{n- \sigma_n} - \eta)^{-\alpha_s} \big(\delta_t v(t_n) - v'(t_{n-\frac{1}{2}})\big) d\eta \Big| \nonumber \\
& & \; \le \, \sum_{s=1}^{m} \frac{\mu_s}{\Gamma(1-\alpha_s)} \, \tau_{n}^2 \, \sup_{\eta \in (t_{n-1},t_n)} |v'''(\eta)| \, \int_{t_{n-1}}^{t_{n- \sigma_n}} (t_{n- \sigma_n} - \eta)^{-\alpha_s} d\eta \nonumber \\
& & \; \le \,  \sum_{s=1}^{m} \frac{\mu_s}{\Gamma(1-\alpha_s)} \, \tau_n^{3-\alpha_s} \, \sup_{\eta \in (t_{n-1},t_n)} |v'''(\eta)| \nonumber \\
& & \; = \, \sum_{s=1}^{m} \frac{\mu_s}{\Gamma(1-\alpha_s)} \, t_{n- \sigma_n}^{-\alpha_s} \, \psi^{n- \sigma_n}_{\alpha_s}.
\end{eqnarray}

\noindent In order to get the estimate for the middle term of \eqref{7e136}, we need to evaluate the following integral using integration by parts.
\begin{eqnarray}\label{7e138}
 & & \sum_{s=1}^{m} \frac{\mu_s}{\Gamma(1-\alpha_s)} \, \int_{t_{n-1}}^{t_{n- \sigma_n}} (t_{n- \sigma_n} - \eta)^{-\alpha_s} (\eta - t_{n-\frac{1}{2}}) d\eta \nonumber \\ 
 & & \; = \,  \sum_{s=1}^{m} \frac{\mu_s}{\Gamma(1-\alpha_s)} \, \Bigg[ \frac{(t_{n- \sigma_n} - t_{n-1})^{1-\alpha_s} \, (t_{n-1} - t_{n-\frac{1}{2}})}{1-\alpha_s} + \frac{(t_{n- \sigma_n} - t_{n-1})^{2-\alpha_s}}{(1-\alpha_s) \, (2-\alpha_s)} \Bigg] \nonumber \\
 & & \; = \, \sum_{s=1}^{m} \frac{\mu_s}{\Gamma(1-\alpha_s)} \, \Bigg[ \frac{(\alpha_s -2) \, (1 - \sigma_n)^{1-\alpha_s} \, \tau_n^{2-\alpha_s}}{2 (1-\alpha_s) (2-\alpha_s)} + \frac{ (1 - \sigma_n)^{2-\alpha_s} \tau_n^{2-\alpha_s}}{ (1-\alpha_s) (2-\alpha_s)} \Bigg] \nonumber  \\
 & & \; = \, \sum_{s=1}^{m} \frac{\mu_s}{\Gamma(1-\alpha_s)} \, (\sigma_n^{\star})^{1-\alpha_s} \, \tau_n^{2-\alpha_s} \, \Big( \sigma_n^{\star} - \big(1- \frac{\alpha_s}{2} \big) \Big) \nonumber  \\
 & & \;= \, 0, \quad \mbox{as $\sigma_n^{\star}$ is a root of the function $\mathcal{G}_n.$ }
\end{eqnarray}

\noindent Also, for the third term of \eqref{7e136}, we have 
\begin{eqnarray}\label{7e139}
  & & \Big| \sum_{s=1}^{m} \frac{\mu_s}{\Gamma(1-\alpha_s)} \, \frac{1}{2} \int_{t_{n-1}}^{t_{n- \sigma_n}} (t_{n- \sigma_n} - \eta)^{-\alpha_s} (\eta - t_{n-\frac{1}{2}})^2 \, v'''(\zeta_n(\eta)) d\eta \Big| \nonumber \\
  & & \; \le \, \sum_{s=1}^{m} \frac{\mu_s}{\Gamma(1-\alpha_s)} \, \tau_n^2 \, \sup_{\eta \in (t_{n-1},t_n)} |v'''(\eta)| \, \int_{t_{n-1}}^{t_{n- \sigma_n}} (t_{n- \sigma_n} - \eta)^{-\alpha_s} d\eta \nonumber  \\
  & & \; = \, \sum_{s=1}^{m} \frac{\mu_s}{\Gamma(1-\alpha_s)} \, t_{n- \sigma_n}^{-\alpha_s} \, \psi^{n- \sigma_n}_{\alpha_s}.
\end{eqnarray}

\noindent Using the results obtaining in \eqref{7e137}-\eqref{7e139} into \eqref{7e136}, we can arrive at 
\begin{eqnarray}\label{7e140}
  |\mathcal{B}| \, \le \, \sum_{s=1}^{m} \frac{\mu_s}{\Gamma(1-\alpha_s)} \, t_{n- \sigma_n}^{-\alpha_s} \, \psi^{n- \sigma_n}_{\alpha_s}. 
\end{eqnarray}

\noindent Combining the inequalities \eqref{7e132}, \eqref{7e134} and \eqref{7e140}, it follows that  
\begin{eqnarray}\label{7e141}
|e^{n- \sigma_n}| \, \le \, \sum_{s=1}^{m} \frac{\mu_s}{\Gamma(1-\alpha_s)} \, t_{n- \sigma_n}^{-\alpha_s} \, \big( \psi^{n- \sigma_n}_{\alpha_s} + \max_{1 \le j \le n-1} \psi^{n,j}_{\alpha_s} \big).
\end{eqnarray}

\noindent An application of Lemma \ref{7l6a} into \eqref{7e141} leads to 
\begin{eqnarray}\label{7e142}
|e^{n- \sigma_n}| & \le & C \, \sum_{s=1}^{m} \frac{\mu_s}{\Gamma(1-\alpha_s)} \, t_{n- \sigma_n}^{-\alpha_s} \, N^{-\min\{3-\alpha_s, r \alpha_1\}} \nonumber \\
& \le & C N^{-\min\{3-\alpha_1, r \alpha_1\}} \, \sum_{s=1}^{m} \frac{\mu_s}{\Gamma(1-\alpha_s)} \, t_{n- \sigma_n}^{-\alpha_s}. 
\end{eqnarray}
This completes the proof.
\end{proof}

Using the above notations and let $U^n$ (for $n= 0, \ldots, N$) denotes the approximate value of $u$ at time $t_n$, now we write the \textbf{fully-discrete scheme} for \eqref{7e1} as follows. For each $n=1, \ldots, N$, find $U^n \in V_h$ such that $\forall v_h \in V_h$,
\begin{subequations}\label{7e6}
\begin{align}
 \label{7e6:1.1}
     U^0 & = u^0_h,& \\
 \label{7e6:1.2}
     \left( {D}_{N} \, U^{1- \sigma_1}, v_h \right) \, + \, a\big(l(U^{1, \sigma_1})\big) \, (\nabla U^{1, \sigma_1}, \nabla v_h) & =  (f^{1- \sigma_1}, v_h),& \\
   \label{7e6:1.3}
	\left( {D}_{N} \, U^{n- \sigma_n}, v_h \right) \, + \, a\big(l(\widehat{U}^{n - \sigma_n})\big) \, (\nabla U^{n, \sigma_n}, \nabla v_h) & = (f^{n- \sigma_n}, v_h),&  \mbox{for} \; n \ge 2.
\end{align}
\end{subequations}
where $u^0_h$ is some approximation of $u_0(x)$. 

As mentioned in \cite{[r3],[sk],[me],[sp1]}, a direct application of Newton's method to solve a nonlocal problem is computationally expensive. Due to this reason, first we need to reformulate the original problem. To avoid the reformulation and Newton's iteration at each time level, here we use the idea introduced in \cite{[r3]} to find $U^1$ only. Once $U^1$ is obtained, in order to get $U^n$ (for $n \ge 2$), we use a linearization technique. 

\section{A Discrete Fractional Gr$\ddot{{o}}$nwall Inequality}\label{7DFG}
In this section, we present the discrete fractional Gr$\ddot{{o}}$nwall inequality, which plays an important role in the derivation of \textit{a priori} bound and convergence result. 

For this, we define the coefficients $p_{n,i}$ (for $n=1,2,...,N$) such that
\begin{eqnarray}\label{7e121a}
   \sum_{k=j}^{n} \, p_{n,k} \, g_{k,j} \equiv 1, \quad \mbox{for} \: 1 \le j \le n \le N.
\end{eqnarray}

Therefore, taking $j=i$ and $j=i+1$ in \eqref{7e121a}, we can define $p_{n,i}$ as follows:
\begin{eqnarray}\label{7e121}
p_{n,i} :=  \left\{\begin{array}{ll} \frac{1}{g_{i,i}} \sum_{k=i+1}^{n} \left( g_{k, i+1} - g_{k, i} \right) p_{n,k} & (\mbox{if} \; i=1, 2,...,n-1), \vspace{5pt} \\
\frac{1}{g_{n,n}} & (\mbox{if} \; \, i=n).
\end{array}\right.
\end{eqnarray}

Moreover, as $p_{n,i} \, g_{i,i}$ $\le$ $\sum_{k=i}^{n} \, p_{n,k} \, g_{k,i}$ $=$ $1$, 
\begin{eqnarray}\label{7e122}
0 \, < \, p_{n,i} \, \le \, \frac{1}{g_{i,i}} \, \le \frac{11}{4 \, \mu} \, \tau_i^{\alpha_1}.
\end{eqnarray}

In the following two Lemmas, we state and prove the properties of $p_{n,k}$.
 These lemmas are generalization of~\cite[Lemmas 2.1 and 2.2]{[r5]} to multi-term time-fractional derivative. The proofs are similar to~\cite[Lemmas 2.1 and 2.2]{[r5]} with some modifications.
 

\begin{lemma}\label{7l2}
  Let $v: [0,T] \longrightarrow \mathbb{R}$ be a continuous, piecewise-$C^1$ function.
  \begin{enumerate}
   \item[(1)] If $v'$ is nonnegative and decreasing, then for $1 \le n \le N$, 
     \begin{eqnarray}\label{7e5}
       \sum_{j=1}^{n} \, p_{n,j} \, ^{c}_{0}{D}_{t_j} v \, \le \, \frac{11}{4} \, \int_{0}^{t_n} v'(\eta) \, d\eta.
     \end{eqnarray}
    \item[(2)] If $v'$ is nonnegative and monotonic, then for $1 \le n \le N$,
      \begin{eqnarray}\label{7e5a}
      \sum_{j=1}^{n-1} \, p_{n,j} \, ^{c}_{0}{D}_{t_j} v \, \le \, \frac{11}{4} \, \int_{0}^{t_n} v'(\eta) \, d\eta.
      \end{eqnarray}
  \end{enumerate}
\end{lemma}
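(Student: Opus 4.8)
\noindent\emph{Proof idea.}
The plan is to read \eqref{7e121a} as the statement that the lower-triangular arrays $(p_{n,j})$ and $(g_{j,k})$ are mutually inverse discrete convolutions, so that for any grid function $(w^{j})$ one has $\sum_{j=1}^{n}p_{n,j}\big(\sum_{k=1}^{j}g_{j,k}(w^{k}-w^{k-1})\big)=w^{n}-w^{0}$. If I can bound the \emph{continuous} operator ${}^{c}_{0}{D}_{t_j}v$ from above by such a discrete convolution of the increments $V_{k}:=v(t_{k})-v(t_{k-1})=\int_{t_{k-1}}^{t_{k}}v'(\eta)\,d\eta\ge 0$ against the kernels $g_{j,k}$, up to the universal factor $\tfrac{11}{4}$ supplied by \eqref{7e8b}, then the weighted sum on the left of \eqref{7e5} telescopes to $v(t_{n})-v(0)=\int_{0}^{t_{n}}v'$.

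For part (1), I would first write ${}^{c}_{0}{D}_{t_j}v=\sum_{s=1}^{m}\frac{\mu_s}{\Gamma(1-\alpha_s)}\sum_{k=1}^{j}\int_{t_{k-1}}^{t_{k}}(t_j-\eta)^{-\alpha_s}v'(\eta)\,d\eta$. On each $[t_{k-1},t_{k}]$ the singular kernel $\eta\mapsto(t_j-\eta)^{-\alpha_s}$ is increasing while $v'$ is decreasing, so the Chebyshev integral inequality for oppositely monotone functions gives $\int_{t_{k-1}}^{t_{k}}(t_j-\eta)^{-\alpha_s}v'(\eta)\,d\eta\le\frac{V_{k}}{\tau_{k}}\int_{t_{k-1}}^{t_{k}}(t_j-\eta)^{-\alpha_s}\,d\eta$. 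Summing over $s$ and invoking \eqref{7e8b} — whose hypothesis $\max_{2\le n\le N}\tau_{n-1}/\tau_{n}\le\tfrac{7}{4}$ holds automatically on the graded mesh because $\tau_{n-1}\le\tau_{n}$ by \eqref{7e20b} — yields ${}^{c}_{0}{D}_{t_j}v\le\frac{11}{4}\sum_{k=1}^{j}V_{k}\,g_{j,k}$. Multiplying by $p_{n,j}>0$ (cf.\ \eqref{7e122}), summing over $j=1,\dots,n$, interchanging the order of summation, and using $\sum_{j=k}^{n}p_{n,j}g_{j,k}=1$, I obtain $\sum_{j=1}^{n}p_{n,j}\,{}^{c}_{0}{D}_{t_j}v\le\frac{11}{4}\sum_{k=1}^{n}V_{k}=\frac{11}{4}\int_{0}^{t_{n}}v'(\eta)\,d\eta$, which is \eqref{7e5}.

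For part (2): if $v'$ is decreasing, \eqref{7e5a} follows at once from \eqref{7e5}, since every term $p_{n,j}\,{}^{c}_{0}{D}_{t_j}v$ is nonnegative and we only drop the $j=n$ term. If $v'$ is increasing the Chebyshev step points the wrong way, so instead I would bound $v'(\eta)\le v'(t_{k})\le\frac{1}{\tau_{k+1}}\int_{t_{k}}^{t_{k+1}}v'=\frac{V_{k+1}}{\tau_{k+1}}\le\frac{V_{k+1}}{\tau_{k}}$ on $[t_{k-1},t_{k}]$ — valid because in \eqref{7e5a} the indices obey $k\le j\le n-1$, so $k+1\le n$, and because $\tau_{k}\le\tau_{k+1}$ by \eqref{7e20b}. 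Running the same argument with the truncated outer sum $j=1,\dots,n-1$, the interchange now leaves the inner sum $\sum_{j=k}^{n-1}p_{n,j}g_{j,k}=1-p_{n,n}g_{n,k}\le 1$ (again by positivity of the entries), whence $\sum_{j=1}^{n-1}p_{n,j}\,{}^{c}_{0}{D}_{t_j}v\le\frac{11}{4}\sum_{k=2}^{n}V_{k}\le\frac{11}{4}\int_{0}^{t_{n}}v'(\eta)\,d\eta$.

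The only genuinely delicate point is the choice of the one-sided substitute for $v'$ on each subinterval: a reverse Chebyshev inequality when $v'$ and the singular kernel are oppositely monotone, but a forward-shifted interval average (costing one factor of the mesh ratio, absorbed via \eqref{7e20b}) when they are co-monotone, together with the bookkeeping needed to see that truncating the $j$-summation at $n-1$ still keeps the collapsing factor $\le 1$. Everything else — the interchange of summations and the telescoping — is a mechanical consequence of the inverse-convolution identity \eqref{7e121a} and the kernel estimate \eqref{7e8b}, exactly as in \cite[Lemmas 2.1 and 2.2]{[r5]}, the extra work here being only the insertion of the sums over $s$ and the multi-term version of \eqref{7e8b}.
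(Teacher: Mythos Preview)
Your proposal is correct and follows essentially the same route as the paper: for part~(1) both you and the paper apply Chebyshev's sorting inequality on each subinterval, convert the averaged kernel into $\tfrac{11}{4}g_{j,k}$ via \eqref{7e8b}, and then collapse the double sum using \eqref{7e121a}; for part~(2) in the increasing case both arguments bound $v'(\eta)\le v'(t_k)$, shift to the next interval using $\tau_k\le\tau_{k+1}$ and $v'(t_k)\tau_{k+1}\le V_{k+1}$, and use $\sum_{j=k}^{n-1}p_{n,j}g_{j,k}\le 1$. The only cosmetic difference is that you chain the inequalities $v'(t_k)\le V_{k+1}/\tau_{k+1}\le V_{k+1}/\tau_k$ before summing, whereas the paper first sums and then applies $\tau_k\le\tau_{k+1}$ followed by $v'(t_k)\tau_{k+1}\le\int_{t_k}^{t_{k+1}}v'$; the content is identical.
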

\begin{proof}
   \textit{(1)} Assume that $v'$ is nonnegative and decreasing.
   \begin{eqnarray}\label{7e7}
     ^{c}_{0}{D}_{t_j} v & = & \sum_{s=1}^{m} \, \mu_s \, ^{c}_{0}{D}^{\alpha_s}_{t_j} v \nonumber \\
     & = & \sum_{s=1}^{m} \, \mu_s \, \sum_{k=1}^{j} \int_{t_{k-1}}^{t_k} \frac{(t_j - \eta)^{- \alpha_s}}{\Gamma{(1- \alpha_s)}} \, v'(\eta) \, d\eta.
  \end{eqnarray}
  
  \noindent Since $\frac{(t_j - \eta)^{- \alpha_s}}{\Gamma{(1- \alpha_s)}}$ is increasing and $v'$ is decreasing function, using Chebyshev's sorting inequality~\cite[P. 168, Item 236]{[inequalities]} in \eqref{7e7}, we can arrive at
  	\begin{eqnarray}\label{7e10}
  	  ^{c}_{0}{D}_{t_j} v & \le & \sum_{s=1}^{m} \, \mu_s \, \Big\{ \sum_{k=1}^{j} \, \Big( \frac{1}{\tau_k} \int_{t_{k-1}}^{t_k} \frac{(t_j - \eta_1)^{- \alpha_s}}{\Gamma{(1- \alpha_s)}} \, d\eta_1 \Big) \, \Big( \int_{t_{k-1}}^{t_k} v'(\eta) \, d\eta \Big) \Big\}.
  	\end{eqnarray}
  	
  	\noindent After rearranging the terms of \eqref{7e10}, we can obtain
  	\begin{eqnarray}\label{7e11}
  	^{c}_{0}{D}_{t_j} v & \le & \sum_{k=1}^{j} \, \Big\{ \Big( \sum_{s=1}^{m} \, \frac{\mu_s}{\tau_k} \int_{t_{k-1}}^{t_k} \frac{(t_j - \eta_1)^{- \alpha_s}}{\Gamma{(1- \alpha_s)}} d\eta_1 \Big) \, \Big( \int_{t_{k-1}}^{t_k} v'(\eta) \, d\eta \Big) \Big\} \nonumber \\
  	&\le & \frac{11}{4} \, \sum_{k=1}^{j} \, g_{j,k} \, \Big( \int_{t_{k-1}}^{t_k} v'(\eta) \, d\eta \Big).
  	\end{eqnarray}
  	
  	\noindent Now, we multiply \eqref{7e11} by $p_{n,j}$ and then sum over $j=1$ to $n$ to get
  	\begin{eqnarray}\label{7e12}
  	\sum_{j=1}^{n} \, p_{n,j} \, ^{c}_{0}{D}_{t_j} v & \le & \frac{11}{4} \, \sum_{j=1}^{n} \, p_{n,j} \, \sum_{k=1}^{j} \, g_{j,k} \, \Big( \int_{t_{k-1}}^{t_k} v'(\eta) \, d\eta \Big) \nonumber \\
  	& \le & \frac{11}{4} \, \sum_{k=1}^{n} \, \Big( \int_{t_{k-1}}^{t_k} v'(\eta) \, d\eta \Big) \, \sum_{j=k}^{n} \, p_{n,j} \, g_{j,k} \nonumber \\ 
  	& \le & \frac{11}{4} \, \int_{t_0}^{t_n} v'(\eta) \, d\eta.
  	\end{eqnarray}
  	
  	\textit{(2)} Assume that $v'$ is nonnegative and monotonic. Suppose $v'$ is monotonic decreasing. As $^{c}_{0}{D}_{t_j} v \ge 0,$ \eqref{7e5a} follows from \textit{(1)}.\\
  	If $v'$ is monotonic increasing, then
  	\begin{eqnarray}\label{7e17}
  	 \sum_{j=1}^{n-1} \, p_{n,j} \, ^{c}_{0}{D}_{t_j} v & = & \sum_{j=1}^{n-1} \, p_{n,j} \, \sum_{s=1}^{m} \, \mu_s \, ^{c}_{0}{D}^{\alpha_s}_{t_j} v \nonumber \\
  	 & = & \sum_{j=1}^{n-1} \, p_{n,j} \, \sum_{s=1}^{m} \, \mu_s \, \sum_{k=1}^{j} \int_{t_{k-1}}^{t_k} \frac{(t_j - \eta)^{- \alpha_s}}{\Gamma{(1- \alpha_s)}} \, v'(\eta) \, d\eta \nonumber \\
  	 & \le & \sum_{j=1}^{n-1} \, p_{n,j} \, \sum_{k=1}^{j} \, v'(t_k) \, \sum_{s=1}^{m} \, \mu_s \int_{t_{k-1}}^{t_k} \frac{(t_j - \eta)^{- \alpha_s}}{\Gamma{(1- \alpha_s)}} \,  d\eta \nonumber \\
  	 & \le & \frac{11}{4} \, \sum_{j=1}^{n-1} \, p_{n,j} \, \sum_{k=1}^{j} \, v'(t_k) \, g_{j,k} \, \tau_k \nonumber \\
  	 & \le & \frac{11}{4} \, \sum_{k=1}^{n-1} \, v'(t_k) \, \tau_k \, \sum_{j=k}^{n-1} \, p_{n,j} \, g_{j,k}  \nonumber \\
  	 & \le & \frac{11}{4} \, \sum_{k=1}^{n-1} \, v'(t_k) \, \tau_{k+1} \nonumber \\
  	 & \le & \frac{11}{4} \, \sum_{k=1}^{n-1} \, \int_{t_k}^{t_{k+1}} v'(\eta) \, d\eta \nonumber \\
  	 & \le & \frac{11}{4} \, \int_{0}^{t_n} v'(\eta) \, d\eta. 
  	\end{eqnarray}
This completes the proof. 	 
\end{proof}

\begin{lemma}\label{7l4} 
   For any constant $\mathfrak{K} \in \mathbb{R}^{+}$,
  \begin{eqnarray}\label{7e18}
  \sum_{j=1}^{n-1} \, p_{n,j} \, E_{\alpha_1} (\mathfrak{K} \, t_j^{\alpha_1}) & \le & \frac{11}{4 \, \mu_1 \, \mathfrak{K}} \, \Big( E_{\alpha_1} (\mathfrak{K} \, t_n^{\alpha_1}) -1 \Big),
  \end{eqnarray}
  where $E_{\alpha_1}$ is the Mittag-Leffler function.
\end{lemma}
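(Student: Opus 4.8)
The plan is to express $E_{\alpha_1}(\mathfrak{K}t_j^{\alpha_1})$ through the Caputo derivative of order $\alpha_1$ of $v(t):=E_{\alpha_1}(\mathfrak{K}t^{\alpha_1})$, bound that single-term derivative by the full multi-term one, and then invoke Lemma~\ref{7l2}(2). The starting point is the eigenfunction identity ${}^{c}_{0}D^{\alpha_1}_{t}v(t)=\mathfrak{K}\,v(t)$ (immediate from applying ${}^{c}_{0}D^{\alpha_1}_{t}$ term by term to the series $v(t)=\sum_{k\ge0}\mathfrak{K}^{k}t^{\alpha_1 k}/\Gamma(\alpha_1 k+1)$), which gives ${}^{c}_{0}D^{\alpha_1}_{t_j}v=\mathfrak{K}\,E_{\alpha_1}(\mathfrak{K}t_j^{\alpha_1})$ for each $j$. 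The same series shows $v$ is increasing on $(0,T]$, so $v'\ge0$ and therefore each single-term derivative ${}^{c}_{0}D^{\alpha_s}_{t_j}v=\frac{1}{\Gamma(1-\alpha_s)}\int_{0}^{t_j}(t_j-\eta)^{-\alpha_s}v'(\eta)\,d\eta$ is nonnegative; since every $\mu_s>0$, discarding the terms with $s\ge2$ yields $\mu_1\,{}^{c}_{0}D^{\alpha_1}_{t_j}v\le{}^{c}_{0}D_{t_j}v$. Multiplying by $p_{n,j}>0$ (recall~\eqref{7e122}) and summing over $1\le j\le n-1$, the claim~\eqref{7e18} reduces to showing
\[
\sum_{j=1}^{n-1}p_{n,j}\,{}^{c}_{0}D_{t_j}v\ \le\ \frac{11}{4}\Big(E_{\alpha_1}(\mathfrak{K}t_n^{\alpha_1})-1\Big)=\frac{11}{4}\int_{0}^{t_n}v'(\eta)\,d\eta .
\]

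This is exactly the conclusion of Lemma~\ref{7l2}(2), with one catch: that lemma requires $v'$ to be monotone, whereas $v'(t)=\sum_{k\ge1}\mathfrak{K}^{k}t^{\alpha_1 k-1}/\Gamma(\alpha_1 k)$ is not (its $k=1$ term decays like $t^{\alpha_1-1}$ near $0$, while the higher-order terms grow). Overcoming this non-monotonicity is the main obstacle, and I would handle it by a sign-of-exponent splitting: write $v'=\psi_{1}+\psi_{2}$, where $\psi_{1}$ collects the finitely many terms with $\alpha_1 k\le1$ (so $\psi_1\ge0$ is nonincreasing) and $\psi_{2}$ collects the rest (so $\psi_2\ge0$ is nondecreasing); for $\alpha_1>\tfrac12$ this is simply $\psi_1(t)=\mathfrak{K}t^{\alpha_1-1}/\Gamma(\alpha_1)$, $\psi_2=v'-\psi_1$. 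Let $\phi_1,\phi_2$ be the primitives of $\psi_1,\psi_2$ vanishing at $t=0$; each is continuous and piecewise-$C^{1}$ on $[0,T]$ (the only issue is at $t=0$, where $\phi_1'=\psi_1$ is still integrable because all exponents satisfy $\alpha_1 k-1>-1$), and $\phi_1+\phi_2=v-1$.

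Finally, apply Lemma~\ref{7l2}(2) to $\phi_1$ and to $\phi_2$ separately, obtaining $\sum_{j=1}^{n-1}p_{n,j}\,{}^{c}_{0}D_{t_j}\phi_{i}\le\frac{11}{4}\int_{0}^{t_n}\phi_{i}'(\eta)\,d\eta$ for $i=1,2$. Since the Caputo operator is linear and annihilates constants, ${}^{c}_{0}D_{t_j}v={}^{c}_{0}D_{t_j}(v-1)={}^{c}_{0}D_{t_j}\phi_1+{}^{c}_{0}D_{t_j}\phi_2$, so adding the two estimates yields precisely the displayed bound, and together with the reduction above this proves~\eqref{7e18}. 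Relative to the single-term argument of~\cite[Lemma~2.2]{[r5]}, the genuinely new points are the inequality $\mu_1\,{}^{c}_{0}D^{\alpha_1}_{t_j}v\le{}^{c}_{0}D_{t_j}v$ (which is what produces the factor $\mu_1$ in the denominator of~\eqref{7e18}) and the need to carry the running sums $\sum_{s=1}^{m}\mu_s(\cdot)$ through the proof of Lemma~\ref{7l2}; the splitting of $v'$ and the bookkeeping with $g_{n,j}$, $p_{n,j}$ are otherwise the same.
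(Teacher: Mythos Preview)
Your proof is correct and rests on the same two ingredients as the paper's: the inequality $\mu_1\,{}^{c}_{0}D^{\alpha_1}_{t_j}w\le{}^{c}_{0}D_{t_j}w$ for nondecreasing $w$, and Lemma~\ref{7l2}(2). The only organisational difference is that the paper avoids your monotonicity splitting by working term-by-term on the Mittag--Leffler series---it applies Lemma~\ref{7l2}(2) to each monomial $v_k(t)=t^{k\alpha_1}/\Gamma(1+k\alpha_1)$ (whose derivative is automatically monotone), uses ${}^{c}_{0}D^{\alpha_1}_{t_j}v_k=v_{k-1}(t_j)$, multiplies by $\mathfrak{K}^k$ and sums---whereas you bundle the monomials into two monotone groups and apply the lemma twice; both routes are equivalent.
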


\begin{proof}
  Let $\mathfrak{K} \in \mathbb{R}^{+}$ and $v_k(t) := \frac{t^{k \alpha_1}}{\Gamma {(1+ k \alpha_1)}}$. From the definition of the Mittag-Leffler function,
  \begin{eqnarray}\label{7e19}
  E_{\alpha_1} (\mathfrak{K} \, t^{\alpha_1}) & = & \sum_{k=0}^{\infty} \, \frac{\mathfrak{K}^k t^{k \alpha_1}}{\Gamma {(1+ k \alpha_1)}} \; = \; 1 +  \sum_{k=0}^{\infty} \, \mathfrak{K}^k \, v_k(t).
  \end{eqnarray}
  
  \noindent Since $v'$ is nonnegative and monotonic, it follows from Lemma~\ref{7l2}\textit{(2)} that 
  \begin{eqnarray}\label{7e25}
  \sum_{j=1}^{n-1} \, p_{n,j} \, ^{c}_{0}{D}_{t_j} v_k & \le & \frac{11}{4} \, \int_{0}^{t_n} v_k'(\eta) \, d\eta \; = \; \frac{11}{4} \, v_k (t_n). 
  \end{eqnarray}
  
  \noindent As $v'$ is nonnegative, Also, $ ^{c}_{0}{D}^{\alpha_1}_{t_j} v, \: ^{c}_{0}{D}_{t_j} v \ge 0$. Also, $^{c}_{0}{D}^{\alpha_1}_{t_j} v_k = v_{k-1} (t_j)$. Therefore,  
  \begin{eqnarray}\label{7e26}
  \mu_1 \, \sum_{j=1}^{n-1} \, p_{n,j} \, v_{k-1} (t_j)  \, \le \, \sum_{j=1}^{n-1} \, p_{n,j} \, ^{c}_{0}{D}_{t_j} v_k \, \le \, \frac{11}{4} \, v_k (t_n). 
  \end{eqnarray}
  
  \noindent Now, we multiply \eqref{7e26} with $\mathfrak{K}^k$ and then sum over $k=1$ to $i$ to arrive at 
  \begin{eqnarray}\label{7e27}
   \sum_{k=1}^{i} \, \mathfrak{K}^k \, \sum_{j=1}^{n-1} \, p_{n,j} \, v_{k-1} (t_j) & \le & \frac{11}{4 \, \mu_1} \, \sum_{k=1}^{i} \, \mathfrak{K}^k \, v_k (t_n). 
  \end{eqnarray}
  
  \noindent Therefore,
  \begin{eqnarray}\label{7e29}
   \mathfrak{K} \, \sum_{k=0}^{i-1} \, \sum_{j=1}^{n-1} \, p_{n,j} \, \mathfrak{K}^{k} \, v_k (t_j) & \le & \frac{11}{4 \, \mu_1} \, \Big( \sum_{k=0}^{i} \, \mathfrak{K}^k \, v_k (t_n) -1 \Big). 
  \end{eqnarray}
  
  \noindent After interchanging the sums on L.H.S. of \eqref{7e29} and taking $i \rightarrow \infty$, 
  \begin{eqnarray}\label{7e36}
   \mathfrak{K} \sum_{j=1}^{n-1} \, p_{n,j} \, E_{\alpha_1} (\mathfrak{K} \, t_j^{\alpha_1}) & \le & \frac{11}{4 \, \mu_1 \, \mathfrak{K}} \, \Big( E_{\alpha_1} (\mathfrak{K} \, t_n^{\alpha_1}) -1 \Big). 
  \end{eqnarray}
  This completes the proof.
\end{proof}

 In the following theorem, now we write the discrete fractional Gr$\ddot{{o}}$nwall inequality, which is a multi-term analogue of~\cite[Lemma 4.1]{[r16]}.
 
\begin{theorem}\label{7l5}  
	 Let $\lambda_i$ be nonnegative constants with $0 < \sum_{i=0}^{N-1} \lambda_i \le \Lambda$ for $n \ge 1$, where $\Lambda$ is some positive constant independent of $n$. Suppose that the nonnegative sequences $\{\xi ^n, \: \zeta ^n : n \ge 1\}$ are bounded and the nonnegative grid function $\{v^n : n \ge 0\}$ satisfies 
	\begin{eqnarray}\label{7e105}
	\begin{split}
	D_N (v^{n- \sigma_n})^2 & \le & \sum_{k=1}^{n} \, \lambda_{n-k} \, (v^{k, \sigma_k})^2 + \, \xi^n \, v^{k, \sigma_k} \, + \,  (\zeta ^n)^2, \quad \mbox{for} \; \, 1 \le n \le N.
	\end{split}
	\end{eqnarray} 
	
	\noindent If the non-uniform grid satisfies the criterion $\max\limits_{1 \le n \le N}\tau_n \le \big(\frac{11}{2} \, \mu \, \Lambda \big)^{-\frac{1}{\alpha_1}}$,  then  
	\begin{eqnarray}\label{7e109}
	v^n & \le & 2 E_{\alpha} \bigg(\frac{11}{2 \, \mu_1} \, \Lambda \, t_n^{\alpha}\bigg) \, \Big[ v^0 + \max_{1 \le k \le n} \sum_{j=1}^{k} \, p_{k,j} \, (\xi^j + \zeta^j) + \max_{1 \le j \le n} \, \{ \zeta^j \} \Big].
	\end{eqnarray}	
\end{theorem}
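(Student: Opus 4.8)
The plan is to follow the now-standard argument for discrete fractional Grönwall inequalities: convert the recursive inequality \eqref{7e105} into a bound on $(v^{n,\sigma_n})^2$ by ``inverting'' the operator $D_N$ through the coefficients $p_{n,j}$, then absorb the Mittag-Leffler factor via Lemma~\ref{7l4}, and finally pass from a bound on $(v^{n,\sigma_n})^2$ to a bound on $v^n$ using monotonicity of the $g_{n,j}$. First I would fix $n$, and for each $1\le k\le n$ multiply \eqref{7e105} (written at level $k$ rather than $n$) by $p_{n,k}$ and sum over $k=1,\ldots,n$. On the left-hand side the identity $\sum_{k=j}^{n}p_{n,k}\,g_{k,j}\equiv 1$ from \eqref{7e121a} telescopes $\sum_{k=1}^{n}p_{n,k}D_N(v^{k,\sigma_k})^2$ down to $(v^{n,\sigma_n})^2 - (v^0)^2$ (after unwinding the definition \eqref{7e127} of $D_N$ and reindexing the double sum, exactly as in~\cite[Lemma 4.1]{[r16]}). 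This yields
\begin{eqnarray*}
(v^{n,\sigma_n})^2 \;\le\; (v^0)^2 \;+\; \sum_{k=1}^{n} p_{n,k}\sum_{i=1}^{k}\lambda_{k-i}(v^{i,\sigma_i})^2 \;+\; \sum_{k=1}^{n} p_{n,k}\,\xi^k v^{k,\sigma_k} \;+\; \sum_{k=1}^{n} p_{n,k}(\zeta^k)^2.
\end{eqnarray*}

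Next I would interchange the order of summation in the double sum containing the $\lambda$'s, writing it as $\sum_{i=1}^{n}(v^{i,\sigma_i})^2\sum_{k=i}^{n}p_{n,k}\lambda_{k-i}$, and crudely bound $\sum_{k=i}^{n}p_{n,k}\lambda_{k-i}\le \big(\max_{1\le k\le n}p_{n,k}\big)\sum_{l\ge 0}\lambda_l\le \Lambda\,\frac{11}{4\mu}\tau_i^{\alpha_1}$ using \eqref{7e122}; here the mesh restriction $\max_n\tau_n\le(\tfrac{11}{2}\mu\Lambda)^{-1/\alpha_1}$ is what makes this coefficient $\le \tfrac12$ after a further $\tau_i^{\alpha_1}\le\max\tau_n^{\alpha_1}$ estimate, so that one term of the sum on the right (the $i=n$ term, for which $v^{i,\sigma_i}=v^{n,\sigma_n}$) can be absorbed into the left-hand side, leaving a factor $\tfrac12$ and hence an overall $2$ after multiplying through. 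Let $V^n:=\max_{0\le k\le n}v^{k,\sigma_k}$ (or the analogous running maximum); bounding $v^{i,\sigma_i}\le V^n$ and $v^{k,\sigma_k}\le V^n$ in the remaining sums, and using $\sum_{k=1}^{n}p_{n,k}\le \sum_{k=1}^{n}\tfrac{11}{4\mu}\tau_k^{\alpha_1}$ together with the continuous-time surrogate $\sum_k \tau_k^{\alpha_1}\lesssim t_n^{\alpha_1}$-type bounds implicit in Lemma~\ref{7l4}, one arrives at an inequality of the form $(V^n)^2 \le C_1 t_n^{\alpha_1}(V^n)^2\big/(\text{something})+\ldots$; more cleanly, one keeps the $\lambda$-term as a genuine recursion $\sum_{i}p_{n,i}\lambda\,(v^{i,\sigma_i})^2$ and applies the Mittag-Leffler comparison. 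Concretely, I would \emph{not} bound the $\lambda$-sum by a maximum but instead iterate: set $w^n:=(v^{n,\sigma_n})^2$, observe $w^n \le A + \tfrac{11\Lambda}{4\mu}\sum_{i=1}^{n}\tau_i^{\alpha_1}\,??$ — the honest route is to test the inequality against the solution of the discrete analogue $D_N E^{n,\sigma_n}= \tfrac{11\Lambda}{4\mu_1}\,\tau$-weighted $E$, whose closed form is $E_{\alpha_1}(\tfrac{11\Lambda}{2\mu_1}t^{\alpha_1})$, and invoke Lemma~\ref{7l4} precisely in the form $\sum_{j=1}^{n-1}p_{n,j}E_{\alpha_1}(\mathfrak K t_j^{\alpha_1})\le \tfrac{11}{4\mu_1\mathfrak K}(E_{\alpha_1}(\mathfrak K t_n^{\alpha_1})-1)$ with $\mathfrak K=\tfrac{11\Lambda}{2\mu_1}$.

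Combining these, the standard bookkeeping gives $(v^{n,\sigma_n})^2\le E_{\alpha_1}(\tfrac{11\Lambda}{2\mu_1}t_n^{\alpha_1})\big[(v^0)^2 + \text{(terms with }\xi,\zeta)\big]$, and then I would take square roots, use $\sqrt{a+b}\le\sqrt a+\sqrt b$ and $\sqrt{E_{\alpha_1}}\le E_{\alpha_1}$ (since $E_{\alpha_1}\ge 1$), and handle the cross term $\xi^k v^{k,\sigma_k}$ by the elementary $\xi^k v^{k,\sigma_k}\le \tfrac14(\xi^k)^2/\epsilon + \epsilon (v^{k,\sigma_k})^2$ or, more in the spirit of the stated conclusion, by factoring $v^{k,\sigma_k}\le V^n$ out of that term too so that it contributes $V^n\sum_j p_{k,j}\xi^j$ and can be moved to the left. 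The final step is the passage from $v^{n,\sigma_n}$ (and its running maximum) back to $v^n$ itself: since $v^{n,\sigma_n}=(1-\sigma_n)v^n+\sigma_n v^{n-1}$ with $\sigma_n=1-\sigma_n^\star\in[0,\tfrac12]$, we have $v^n\le \tfrac{1}{1-\sigma_n}\big(v^{n,\sigma_n}+\sigma_n v^{n-1}\big)$; an induction on $n$ (the base case $n=0$ being trivial and $1-\sigma_n\ge b_1\ge\tfrac12$ giving the factor $2$ in \eqref{7e109}) closes the argument. The main obstacle I anticipate is \textbf{the $\lambda$-term}: making the absorption of $\sum_{k}p_{n,k}\lambda_{k-i}(v^{i,\sigma_i})^2$ rigorous without losing the $\alpha_1$-robustness requires handling it as a genuine Grönwall-type feedback through Lemma~\ref{7l4} rather than by a lossy maximum bound, and checking that the mesh restriction $\max_n\tau_n\le(\tfrac{11}{2}\mu\Lambda)^{-1/\alpha_1}$ is exactly what guarantees the geometric-series / contraction estimate $\tfrac{11}{4\mu}\Lambda\max_n\tau_n^{\alpha_1}\le\tfrac12$ needed to produce the constant $2$ in front; the ``more complicated calculations'' the abstract warns about are precisely the bookkeeping of the $m$-fold sums $g_{n,j}=\sum_s\mu_s g^{(\alpha_s,\sigma_n)}_{n,j}$ inside these steps, but structurally nothing beyond~\cite[Lemma 4.1]{[r16]} and Lemmas~\ref{7l2}--\ref{7l4} above is needed.
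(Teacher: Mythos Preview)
Your approach is essentially the same as the paper's: the paper does not spell out a proof but simply says the result follows from Lemmas~\ref{7l2} and~\ref{7l4} together with mathematical induction, along the lines of~\cite[Lemma~4.1]{[r16]}, and that is exactly the skeleton you have sketched (multiply \eqref{7e105} at level $k$ by $p_{n,k}$, sum, use the identity \eqref{7e121a}, control the $\lambda$-feedback via the Mittag--Leffler comparison of Lemma~\ref{7l4}, and close by induction with the mesh restriction providing the absorption constant~$\tfrac12$).

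One small correction that actually simplifies your plan: the telescoping of $\sum_{k=1}^{n}p_{n,k}\,D_N(v^{k-\sigma_k})^2$ via \eqref{7e121a} yields $(v^n)^2-(v^0)^2$, not $(v^{n,\sigma_n})^2-(v^0)^2$, because $D_N$ in \eqref{7e127} acts on the nodal values $\{(v^j)^2\}$; consequently your final ``pass from $v^{n,\sigma_n}$ back to $v^n$'' step is unnecessary, and the induction runs directly on $v^n$ after bounding each $(v^{k,\sigma_k})^2\le\max\{(v^k)^2,(v^{k-1})^2\}$ on the right-hand side.
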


\begin{proof}
  The proof of this theorem uses Lemmas~ \ref{7l2} and \ref{7l4}, and the principle of mathematical induction. One can prove it using the similar arguments given in the proof for~\cite[Lemma 4.1]{[r16]}.	
\end{proof}

\section{A priori Bound}\label{7bound}
To derive \emph{a priori} bound for the fully-discrete solution $U^n$, first we write two Lemmas, which we will use in further analysis.

\begin{lemma}\label{7l1}
	 For any function $v = v( \cdot , \, t)$ defined on a graded mesh $\left\lbrace  t_j \right\rbrace ^{N}_{j=0}$, one has
	\begin{eqnarray}\label{7ee12}
	   \frac{1}{2} \, {D}_{N} \, \|v^{n + \sigma_n}\|^2 & \le & \left( {D}_{N} \, v^{n- \sigma_n}, \, v^{n, \sigma_n} \right), \quad \mbox{for} \; n= 1, \ldots, N.
	\end{eqnarray}
\end{lemma}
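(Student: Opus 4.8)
The plan is to reduce the inequality to the single-term case and then invoke the known result. By the definition \eqref{7e127} of $D_N$, one has $D_N v^{n-\sigma_n} = \sum_{s=1}^m \mu_s D_N^{\alpha_s} v^{n-\sigma_n}$, where $D_N^{\alpha_s}$ denotes the $L2$-$1_\sigma$ operator built from the single-term coefficients $g^{(\alpha_s,\sigma_n)}_{n,j}$ as in \eqref{7e39}. Since the map $v \mapsto \|v\|^2$ and the bilinear pairing $(\cdot,\cdot)$ are fixed, and $D_N$ acts linearly in the grid values, it suffices to prove the pointwise-in-$s$ estimate
\begin{eqnarray}\label{7pf-l1-single}
\tfrac12 \, D_N^{\alpha_s} \|v^{n-\sigma_n}\|^2 \;\le\; \big( D_N^{\alpha_s} v^{n-\sigma_n}, \, v^{n,\sigma_n} \big), \qquad n = 1,\ldots,N,\ s=1,\ldots,m,
\end{eqnarray}
and then multiply by $\mu_s>0$ and sum over $s$. (Here $D_N^{\alpha_s}\|v^{n-\sigma_n}\|^2$ means the same combination \eqref{7e39} applied to the scalar sequence $\{\|v^j\|^2\}_j$, and $\|v^{n+\sigma_n}\|^2$ on the left of \eqref{7ee12} should be read as $\|v^{n-\sigma_n}\|^2$ in the notation of \eqref{7e127}; I would flag this typo.)

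The estimate \eqref{7pf-l1-single} is exactly the positive-definiteness property of the single-term $L2$-$1_\sigma$ operator on a graded mesh, which is available in the literature the paper already cites for the coefficients $g^{(\alpha_s,\sigma_n)}_{n,j}$ — see \cite[Lemma 4.1]{[r16]} (and, for the uniform-mesh origin, \cite[Lemma 1]{[Alikhnov_2017]}). Concretely, the proof of \eqref{7pf-l1-single} proceeds by writing $D_N^{\alpha_s} v^{n-\sigma_n} = \sum_{j=1}^n g^{(\alpha_s,\sigma_n)}_{n,j}(v^j - v^{j-1})$, expressing $v^{n,\sigma_n} = v^n - \sigma_n(v^n - v^{n-1})$, and using the telescoping/summation-by-parts structure together with the monotonicity \eqref{7e7a} of the $g^{(\alpha_s,\sigma_n)}_{n,j}$ and the elementary bound $2(a-b,a) = \|a\|^2 - \|b\|^2 + \|a-b\|^2 \ge \|a\|^2 - \|b\|^2$; the role of the particular value $\sigma_n = 1-\sigma_n^\star$ is to guarantee that the quadratic form in the differences $v^j-v^{j-1}$ that arises is nonnegative. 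I would either cite this verbatim or reproduce the short argument.

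The main obstacle — or rather the only real point requiring care — is bookkeeping: making sure that the reduction in the first paragraph is legitimate, i.e. that $D_N^{\alpha_s}$ applied to the scalar sequence $\{\|v^j\|^2\}$ and the pairing $(D_N^{\alpha_s}v^{n-\sigma_n},v^{n,\sigma_n})$ are linked by the same algebraic identity used in the single-term proof, which they are because all $g^{(\alpha_s,\sigma_n)}_{n,j}$ for the fixed $s$ satisfy \eqref{7e7a} regardless of the presence of the other terms. Once \eqref{7pf-l1-single} is in hand, multiplying by $\mu_s$ and summing yields $\tfrac12 D_N\|v^{n-\sigma_n}\|^2 = \tfrac12\sum_s \mu_s D_N^{\alpha_s}\|v^{n-\sigma_n}\|^2 \le \sum_s \mu_s (D_N^{\alpha_s}v^{n-\sigma_n},v^{n,\sigma_n}) = (D_N v^{n-\sigma_n},v^{n,\sigma_n})$, which is \eqref{7ee12}, completing the proof.
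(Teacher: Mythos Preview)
Your proposal is correct and follows essentially the same approach as the paper: both reduce the multi-term inequality to the single-term case via $D_N = \sum_s \mu_s D_N^{\alpha_s}$ and then appeal to the known single-term positivity result for the $L2$-$1_\sigma$ operator on graded meshes. The only difference is the citation---the paper invokes \cite[Corollary~1]{[fmr13]} for the single-term estimate, whereas you point to \cite{[r16]} and \cite{[Alikhnov_2017]}---and you correctly flag the $n+\sigma_n$ versus $n-\sigma_n$ notational slip on the left-hand side.
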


\begin{proof} Proof follows form the following result given in \cite[Corollary 1]{[fmr13]}. For any $\alpha, \sigma \in (0,1),$
\begin{eqnarray}\label{7ee12a}
	\frac{1}{2} \, {D}^{\alpha}_{N} \|v^{n + \sigma_n}\|^2 & \le & \left( {D}^{\alpha}_{N} v^{n- \sigma_n}, \, v^{n, \alpha} \right), \quad \mbox{for} \; n= 1, \ldots, N. \nonumber
\end{eqnarray}
\end{proof}

\begin{lemma}\label{7l7}  
	Let $\gamma_s \in (0,1), \forall s=1,...,m.$ Then the coefficients $p_{n,i}$ defined in \eqref{7e121} satisfy the following estimate. 
	\begin{eqnarray}\label{7e124a}
	\sum_{i=1}^{n} \Big( \sum_{s=1}^{m} \mu_s \, \frac{\Gamma{(1+ \gamma_s)}}{\Gamma{(1+ \gamma_s - \alpha_s)}} \, t_i^{\gamma_s - \alpha_s} \Big) \, p_{n,i} & \le & \frac{11}{4} \, \sum_{s=1}^{m} \, t_n^{\gamma_s}.
	\end{eqnarray}
\end{lemma}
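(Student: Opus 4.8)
The plan is to mimic the strategy of Lemma~\ref{7l2}\textit{(2)} and Lemma~\ref{7l4}: pick the right comparison function, apply the sorting/monotonicity estimate to bound $^{c}_{0}{D}_{t_j}$ of it by a multiple of $\sum_k g_{j,k}(\cdot)$, then contract the double sum using the defining identity \eqref{7e121a} for the $p_{n,i}$. The natural choice here is the function whose multi-term Caputo derivative produces exactly the weighted power sum on the left-hand side of \eqref{7e124a}. Concretely, I would set $w(t) := \sum_{s=1}^{m} \mu_s\, t^{\gamma_s}$, and recall the classical identity ${}^{c}_{0}D^{\alpha_s}_{t}\, t^{\gamma_s} = \dfrac{\Gamma(1+\gamma_s)}{\Gamma(1+\gamma_s-\alpha_s)}\, t^{\gamma_s-\alpha_s}$ (valid since $\gamma_s \in (0,1)$ so $t^{\gamma_s}$ is absolutely continuous with integrable derivative). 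The subtlety is that ${}^{c}_{0}D_{t}\,w = \sum_s \mu_s\,{}^{c}_{0}D^{\alpha_s}_{t}(\mu_{\bullet} t^{\gamma_{\bullet}})$ mixes the indices, so instead I would treat each term separately: apply the argument to $v_s(t) := t^{\gamma_s}$ for each fixed $s$, getting a bound on $\mu_s\,{}^{c}_{0}D^{\alpha_s}_{t_j} v_s$, and only sum over $s$ at the end.

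The core step for a single $s$ is as follows. Write
\[
\mu_s\,{}^{c}_{0}D^{\alpha_s}_{t_j} v_s \;=\; \mu_s \sum_{k=1}^{j} \int_{t_{k-1}}^{t_k} \frac{(t_j-\eta)^{-\alpha_s}}{\Gamma(1-\alpha_s)}\, v_s'(\eta)\, d\eta.
\]
Since $v_s'(\eta) = \gamma_s\,\eta^{\gamma_s-1}$ is positive and decreasing on $(0,T]$ (as $\gamma_s < 1$) while $(t_j-\eta)^{-\alpha_s}$ is increasing in $\eta$, Chebyshev's sorting inequality (exactly as in \eqref{7e10}) gives
\[
\mu_s\,{}^{c}_{0}D^{\alpha_s}_{t_j} v_s \;\le\; \mu_s \sum_{k=1}^{j} \Big(\frac{1}{\tau_k}\int_{t_{k-1}}^{t_k}\frac{(t_j-\eta)^{-\alpha_s}}{\Gamma(1-\alpha_s)}\,d\eta\Big)\Big(\int_{t_{k-1}}^{t_k} v_s'(\eta)\,d\eta\Big).
\]
Summing this over $s$, the bracketed frequency-weighted integral becomes (after interchanging the $s$ and $k$ sums) controlled by $g_{j,k}$ through the property \eqref{7e8b}: $\sum_s \mu_s \frac{1}{\tau_k}\int_{t_{k-1}}^{t_k}\frac{(t_j-\eta)^{-\alpha_s}}{\Gamma(1-\alpha_s)}\,d\eta \le \frac{11}{4}\, g_{j,k}$ (this requires the mesh-ratio hypothesis $\tau_{k-1}/\tau_k \le 7/4$, which is inherited from the graded mesh via \eqref{7e20b} — I would note this standing assumption). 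Hence
\[
{}^{c}_{0}D_{t_j} w \;=\; \sum_{s=1}^{m}\mu_s\,{}^{c}_{0}D^{\alpha_s}_{t_j} v_s \;\le\; \frac{11}{4}\sum_{k=1}^{j} g_{j,k}\int_{t_{k-1}}^{t_k} w'(\eta)\,d\eta.
\]

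Finally I would multiply by $p_{n,i}$ and contract. Observing that the left-hand side of \eqref{7e124a} is precisely $\sum_{i=1}^{n}\big({}^{c}_{0}D_{t_i} w\big)\,p_{n,i}$ evaluated pointwise — more carefully, that $\sum_s \mu_s\frac{\Gamma(1+\gamma_s)}{\Gamma(1+\gamma_s-\alpha_s)} t_i^{\gamma_s-\alpha_s} = {}^{c}_{0}D_{t_i} w$ at $t = t_i$ — we get
\[
\sum_{i=1}^{n}\Big(\sum_{s=1}^{m}\mu_s\frac{\Gamma(1+\gamma_s)}{\Gamma(1+\gamma_s-\alpha_s)}t_i^{\gamma_s-\alpha_s}\Big)p_{n,i} \;\le\; \frac{11}{4}\sum_{i=1}^{n} p_{n,i}\sum_{k=1}^{i} g_{i,k}\int_{t_{k-1}}^{t_k} w'(\eta)\,d\eta.
\]
Swapping the order of summation in $i$ and $k$ and invoking $\sum_{i=k}^{n} p_{n,i} g_{i,k} \equiv 1$ from \eqref{7e121a}, the right side telescopes to $\frac{11}{4}\int_{0}^{t_n} w'(\eta)\,d\eta = \frac{11}{4}\,w(t_n) = \frac{11}{4}\sum_{s=1}^{m} t_n^{\gamma_s}$, which is exactly \eqref{7e124a}.

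The main obstacle I anticipate is the passage $\sum_s \mu_s\frac{1}{\tau_k}\int_{t_{k-1}}^{t_k}\frac{(t_j-\eta)^{-\alpha_s}}{\Gamma(1-\alpha_s)}\,d\eta \le \frac{11}{4}\,g_{j,k}$: one must check that the constant $11/4$ (rather than just the $4/11$-type bound of \eqref{7e8b} read in reverse) is legitimate, i.e. that $g_{j,k} \ge \frac{4}{11}\cdot\frac{1}{\tau_k}\sum_s\mu_s\int_{t_{k-1}}^{t_k}\frac{(t_j-\eta)^{-\alpha_s}}{\Gamma(1-\alpha_s)}\,d\eta$ — precisely the content of \eqref{7e8b} — so this is available, but the bookkeeping of which quantity is bounded by which, and ensuring the graded-mesh ratio condition holds on the relevant range of indices, is where care is needed. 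A minor secondary point is justifying termwise the power-rule for the Caputo derivative of $t^{\gamma_s}$ and the interchange of the (finite) $s$-sum with integration, both of which are routine.
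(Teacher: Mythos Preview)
Your overall strategy --- take a power-type comparison function, apply the sorting inequality, then contract with \eqref{7e121a} --- is correct and matches the paper's, but there is a genuine gap in the step where you pass from the Chebyshev bound to the $g_{j,k}$ bound, and the bookkeeping around $w$ is inconsistent. The difficulty is that in $\sum_s \mu_s\,{}^{c}_{0}D^{\alpha_s}_{t_j} v_s$ the index $s$ couples the kernel exponent $\alpha_s$ to the function $v_s=t^{\gamma_s}$. After Chebyshev you have $\sum_k\sum_s A_{s,k}B_{s,k}$ with $A_{s,k}=\frac{\mu_s}{\tau_k}\int_{t_{k-1}}^{t_k}\frac{(t_j-\eta)^{-\alpha_s}}{\Gamma(1-\alpha_s)}\,d\eta$ and $B_{s,k}=t_k^{\gamma_s}-t_{k-1}^{\gamma_s}$. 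Property \eqref{7e8b} gives only $\sum_s A_{s,k}\le\frac{11}{4}g_{j,k}$; you cannot factor this out of $\sum_s A_{s,k}B_{s,k}$ because $B_{s,k}$ still depends on $s$. Relatedly, the displayed identity ${}^{c}_{0}D_{t_j}w=\sum_s\mu_s\,{}^{c}_{0}D^{\alpha_s}_{t_j}v_s$ is false (the left side is the double sum $\sum_{s,s'}\mu_s\mu_{s'}\,{}^{c}_{0}D^{\alpha_s}_{t_j}v_{s'}$), and with your definition $w(t)=\sum_s\mu_s t^{\gamma_s}$ one gets $w(t_n)=\sum_s\mu_s t_n^{\gamma_s}$, not $\sum_s t_n^{\gamma_s}$.

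The missing idea --- and this is exactly how the paper proceeds --- is to decouple first by a crude overcount. Take $\mathcal{V}(t):=\sum_{j=1}^{m}t^{\gamma_j}$ (no $\mu$-weights) and note that, since every term is nonnegative,
\[
\sum_{s=1}^{m}\mu_s\,\frac{\Gamma(1+\gamma_s)}{\Gamma(1+\gamma_s-\alpha_s)}\,t_i^{\gamma_s-\alpha_s}
\;\le\;\sum_{s=1}^{m}\mu_s\sum_{j=1}^{m}\frac{\Gamma(1+\gamma_j)}{\Gamma(1+\gamma_j-\alpha_s)}\,t_i^{\gamma_j-\alpha_s}
\;=\;{}^{c}_{0}D_{t_i}\mathcal{V}.
\]
Now $\mathcal{V}'(\eta)=\sum_j\gamma_j\eta^{\gamma_j-1}$ is nonnegative and decreasing, so Lemma~\ref{7l2}\textit{(1)} applies as a black box and gives $\sum_{i=1}^{n}p_{n,i}\,{}^{c}_{0}D_{t_i}\mathcal{V}\le\frac{11}{4}\int_0^{t_n}\mathcal{V}'=\frac{11}{4}\sum_{s}t_n^{\gamma_s}$, which is \eqref{7e124a}. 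Once this decoupling is in place there is no need to redo the Chebyshev argument by hand.
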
 

\begin{proof}
	Let $\gamma_j \in (0,1), \forall j=1,...,m.$
	To prove this result, we define the function $\mathcal{V}(t) := \sum_{j=1}^{m} t^{\gamma_j}$. 
	\begin{eqnarray}\label{7e143}
	\sum_{s=1}^{m} \mu_s \, \frac{\Gamma{(1+ \gamma_s)}}{\Gamma{(1+ \gamma_s - \alpha_s)}} \, t_i^{\gamma_s - \alpha_s} & \le & \sum_{s=1}^{m} \mu_s \Big(\sum_{j=1}^{m} \frac{\Gamma{(1+ \gamma_j)}}{\Gamma{(1+ \gamma_j - \alpha_s)}} \, t_i^{\gamma_j - \alpha_s} \Big) \nonumber  \vspace{2pt}\\
	& = &  ^{c}_{0}{D}_{t_{i}} \mathcal{V}.
	\end{eqnarray}
	
	\noindent Multiplying \eqref{7e143} by $ p_{n,i}$ and then summing from $i=1$ to $n$, we have 
	\begin{eqnarray}\label{7e144}
	\sum_{i=1}^{n} \Big( \sum_{s=1}^{m} \mu_s \, \frac{\Gamma{(1+ \gamma_s)}}{\Gamma{(1+ \gamma_s - \alpha_s)}} \, t_i^{\gamma_s - \alpha_s} \Big) \, p_{n,i} & \le & \sum_{i=1}^{n} \, ^{c}_{0}{D}_{t_{i}} \mathcal{V} \,  p_{n,i}.
	\end{eqnarray}
	
	\noindent Now, we use Lemma \ref{7l2} into \eqref{7e144} to arrive at 
	\begin{eqnarray}\label{7e145}
	\sum_{i=1}^{n} \Big( \sum_{s=1}^{m} \mu_s \, \frac{\Gamma{(1+ \gamma_s)}}{\Gamma{(1+ \gamma_s - \alpha_s)}} \, t_i^{\gamma_s - \alpha_s} \Big) \, p_{n,i} & \le &  \frac{11}{4} \int_{0}^{t_n} \sum_{j=1}^{m} \gamma_j \, \eta^{\gamma_j -1} \, d\eta \nonumber \\
	& = &  \frac{11}{4} \,  \sum_{s=1}^{m} t_n^{\gamma_s}.
	\end{eqnarray}
	This completes the proof. 
\end{proof}

In the following theorem, we derive \textit{a priori} bound for the full-discrete solution.
\begin{theorem}\label{2th4}
	For each  $n=1, \ldots, N$, the fully-discrete solution $U^{n}$ of \eqref{7e6} satisfies
	\begin{eqnarray}
	\label{2e101}
	\|U^{n}\| & \le & C \, \big(1+\|U^0\| \big), \\
	\label{2e102}
	\|\nabla U^{n}\| & \le & C  \, \big(1 + \|\nabla U^0\| \big).
	\end{eqnarray}	
\end{theorem}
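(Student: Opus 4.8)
The plan is to derive both bounds simultaneously by testing the fully-discrete equations \eqref{7e6} with well-chosen test functions and then invoking the discrete fractional Gr\"onwall inequality (Theorem~\ref{7l5}). For \eqref{2e101} I would take $v_h = U^{n,\sigma_n}$ in \eqref{7e6:1.2}--\eqref{7e6:1.3}. The left-hand side term $\bigl(D_N U^{n-\sigma_n}, U^{n,\sigma_n}\bigr)$ is bounded below by $\tfrac12 D_N \|U^{n+\sigma_n}\|^2$ thanks to Lemma~\ref{7l1}. The diffusion term $a(l(\cdot))(\nabla U^{n,\sigma_n},\nabla U^{n,\sigma_n}) = a(l(\cdot))\|\nabla U^{n,\sigma_n}\|^2$ is nonnegative by hypothesis H1 (indeed $\ge m_1\|\nabla U^{n,\sigma_n}\|^2$), so it can be dropped to the left or retained as a useful coercive term. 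The right-hand side $(f^{n-\sigma_n}, U^{n,\sigma_n})$ is estimated by Cauchy--Schwarz as $\|f^{n-\sigma_n}\|\,\|U^{n,\sigma_n}\|$. Putting these together yields exactly an inequality of the form \eqref{7e105} with $\lambda_i = 0$ (or any small constants), $\xi^n = 2\|f^{n-\sigma_n}\|$, and $\zeta^n = 0$; the grid constraint on $\max\tau_n$ is then vacuous or easily satisfied. Applying Theorem~\ref{7l5} gives $\|U^n\| \le 2E_{\alpha}(0)\bigl[\|U^0\| + \max_k\sum_{j=1}^k p_{k,j}\,\xi^j\bigr]$, and the sum $\sum_j p_{k,j}\|f^{j-\sigma_j}\|$ is controlled using the bound $p_{n,i}\le \tfrac{11}{4\mu}\tau_i^{\alpha_1}$ from \eqref{7e122} together with H3 ($f\in L^2(0,T;L^2(\Omega))$), which bounds $\sum_j \tau_j^{\alpha_1}\|f^{j-\sigma_j}\|$ by a constant depending only on $T$, $\alpha_1$ and $\|f\|_{L^2(0,T;L^2)}$ (via H\"older's inequality in the discrete sum). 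This produces \eqref{2e101}.

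For the gradient bound \eqref{2e102} I would instead test with $v_h = D_N U^{n-\sigma_n}$, the discrete fractional difference itself. The left-hand side then becomes $\|D_N U^{n-\sigma_n}\|^2 \ge 0$. The diffusion term becomes $a(l(\cdot))\,(\nabla U^{n,\sigma_n}, \nabla D_N U^{n-\sigma_n})$; here one uses the analogue of Lemma~\ref{7l1} applied to $\|\nabla U\|^2$ (the same Corollary~1 of \cite{[fmr13]} works with the gradient-weighted inner product), giving $(\nabla U^{n,\sigma_n}, \nabla D_N U^{n-\sigma_n}) \ge \tfrac12 D_N\|\nabla U^{n+\sigma_n}\|^2$, and then multiply by the positive lower bound $m_1$ from H1. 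The forcing term is handled by $(f^{n-\sigma_n}, D_N U^{n-\sigma_n}) \le \tfrac12\|f^{n-\sigma_n}\|^2 + \tfrac12\|D_N U^{n-\sigma_n}\|^2$, the last piece being absorbed into the left-hand side. This leaves an inequality of the shape $\tfrac{m_1}{2} D_N\|\nabla U^{n+\sigma_n}\|^2 \le \tfrac12\|f^{n-\sigma_n}\|^2$, i.e.\ again \eqref{7e105} with $\lambda_i=0$, $\xi^n=0$, and $(\zeta^n)^2 = \tfrac{1}{m_1}\|f^{n-\sigma_n}\|^2$. Theorem~\ref{7l5} then delivers $\|\nabla U^n\| \le C(\|\nabla U^0\| + \max_k\sum_j p_{k,j}\zeta^j + \max_j\zeta^j)$, and the sum over $p_{k,j}\zeta^j$ is bounded as before using \eqref{7e122} and H3.

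The two genuinely delicate points are: (i) the treatment of the nonlocal coefficient $a(l(\widehat U^{n-\sigma_n}))$ and $a(l(U^{1,\sigma_1}))$ --- but because we only ever need the \emph{sign} and the two-sided bound $0 < m_1 \le a(\cdot) \le m_2$ from H1, the Lipschitz hypothesis H2 and the precise value of the argument are irrelevant for the a priori bound, so the nonlocality causes no real difficulty here; and (ii) confirming that the first step $n=1$, which uses the reformulated/Newton-based definition of $U^1$ from \cite{[r3]}, still satisfies the same variational identity \eqref{7e6:1.2} so that it fits the induction --- this is where I would be most careful, checking that $U^1$ does solve \eqref{7e6:1.2} exactly. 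A secondary technical obstacle is making the bound on $\sum_j p_{k,j}\|f^{j-\sigma_j}\|$ genuinely $\alpha_1$-robust: one must verify that the constant emerging from $\sum_j \tau_j^{\alpha_1}\|f^{j-\sigma_j}\| \le \bigl(\sum_j \tau_j^{2\alpha_1-1}\bigr)^{1/2}\bigl(\sum_j \tau_j\|f^{j-\sigma_j}\|^2\bigr)^{1/2}$ (or a cleaner Cauchy--Schwarz split) stays finite and bounded uniformly as $\alpha_1\to 1^-$; since $E_\alpha$ and the Gr\"onwall constant $\tfrac{11}{2\mu_1}\Lambda$ are already $\alpha_1$-robust by Lemma~\ref{7l4}, this is the last piece to pin down. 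Everything else is routine: Cauchy--Schwarz, Young's inequality, and bookkeeping of constants $C$ depending on $T$, $m_1$, $\mu_1$, $\alpha_1$ and the data norms.
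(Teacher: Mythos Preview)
Your $L^2$ estimate follows the paper's route almost exactly: test with $U^{n,\sigma_n}$, invoke Lemma~\ref{7l1}, drop the (nonnegative) diffusion term by H1, and feed the resulting inequality into Theorem~\ref{7l5}. The paper uses Young's inequality to place the forcing in the $(\zeta^n)^2$ slot (so $\lambda_0=1$), whereas you keep it in the $\xi^n\,v^{n,\sigma_n}$ slot with $\lambda_i=0$; this difference is cosmetic.

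There is, however, a genuine gap in how you control $\sum_{j=1}^{k}p_{k,j}\,\|f^{j-\sigma_j}\|$. The pointwise bound $p_{k,j}\le \tfrac{11}{4\mu}\tau_j^{\alpha_1}$ from \eqref{7e122} is far too crude: even on a uniform mesh $\sum_{j=1}^{N}\tau_j^{\alpha_1}=T^{\alpha_1}N^{1-\alpha_1}\to\infty$, and on a graded mesh the same $N^{1-\alpha_1}$ growth persists, so no H\"older split against $\|f\|_{L^2(0,T;L^2)}$ can salvage an $N$-independent constant. The paper instead uses Lemma~\ref{7l7} (with $\gamma_s=\alpha_s$), which yields the sharp bound $\sum_{j=1}^{k}p_{k,j}\le C$ uniformly in $k,N$; one then pulls out $\max_j\|f^{j-\sigma_j}\|$ (so effectively an $L^\infty$-in-time bound on $f$, not merely H3, is being used). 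Lemma~\ref{7l7} is the missing ingredient in your argument.

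For the gradient bound you take a different route: you test with $v_h=D_NU^{n-\sigma_n}$, whereas the paper tests with $v_h=-\Delta_h U^{n,\sigma_n}$. Your choice can be made to work, but the step ``multiply by the positive lower bound $m_1$'' is not innocent: from $a(\cdot)\ge m_1$ one gets $a(\cdot)\,(\nabla U^{n,\sigma_n},\nabla D_NU^{n-\sigma_n})\ge m_1\,(\nabla U^{n,\sigma_n},\nabla D_NU^{n-\sigma_n})$ only when that inner product is nonnegative, which is not known a~priori. A short case analysis on the sign of $D_N\|\nabla U\|^2$ does rescue the conclusion $D_N\|\nabla U\|^2\le m_1^{-1}\|f^{n-\sigma_n}\|^2$, but you must make this explicit. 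The paper's choice $v_h=-\Delta_h U^{n,\sigma_n}$ avoids the issue entirely, since the diffusion term then reads $a(\cdot)\,\|\Delta_h U^{n,\sigma_n}\|^2\ge m_1\|\Delta_h U^{n,\sigma_n}\|^2$, manifestly nonnegative and available to absorb the forcing via Young's inequality; the first term becomes $(\nabla D_NU^{n-\sigma_n},\nabla U^{n,\sigma_n})\ge\tfrac12 D_N\|\nabla U\|^2$ by Lemma~\ref{7l1}, exactly paralleling the $L^2$ case. Your worry about the $n=1$ step is unfounded: $U^1$ is \emph{defined} as the solution of \eqref{7e6:1.2}, and the Newton/reformulation of \cite{[r3]} is only a computational device.
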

	
\begin{proof} 
By choosing $v_h= U^{n, \sigma_n}$ in \eqref{7e6:1.2} and \eqref{7e6:1.3}, and then using the Hypothesis H1, Cauchy-Schwartz inequality, Lemma \ref{7l1} and the Young's inequality, we get
\begin{eqnarray}\label{2e13}
\frac{1}{2} \, {D}_{N} \|U^{n - \sigma_n}\|^2 & \le & \frac{1}{2} \, \|f^{n-\sigma_n}\|^2 + \frac{1}{2} \, \|U^{n, \sigma_n}\|^2.
\end{eqnarray}

\noindent
An application of Lemma \ref{7l5} into \eqref{2e13} leads to	
\begin{eqnarray}\label{2e14}
\|U^{n}\|  & \le & 2  E_{\alpha}\Big(\frac{11}{2 \, \mu_1} \, t_{n}^{\alpha_1}\Big) \Big( \|U^0\|  +  \! \max_{1 \le k \le n} \! \Big( \max_{1 \le j \le k} \|f^{j-\sigma_n}\|\Big) \sum_{j=1}^{k} p_{k,j} + \! \max_{1 \le j \le n}  \|f^{j-\sigma_n}\| \Big). \nonumber
\end{eqnarray}

\noindent
Thus, using Lemma \ref{7l7} with $\gamma_s = \alpha_s$ in the above inequality, we have 	
\begin{eqnarray}\label{2e16}
\|U^{n}\|  & \le & C \, \Big( 1 + \|U^0\| \Big), \nonumber
\end{eqnarray}
where $C = 2 E_{\alpha}\bigg(\frac{11}{2 \, \mu_1} \, t_{n}^{\alpha_1}\bigg) \, \max \bigg\{ 1, \,  \Big( \frac{11\, T^{\alpha_1}}{4\, \min\limits_{1 \le s \le m} (\mu_s \Gamma{(1+ \alpha_s)})} + 1\Big) \max\limits_{1 \le j \le n} \|f^{j-\sigma_n}\| \bigg\}$.\\

\noindent
To derive an estimate for $\|\nabla U^{n}\|$, we rewrite \eqref{7e6:1.2} and \eqref{7e6:1.3} using the operator $\Delta_h$~\cite[P.10]{[vth]}, and then we take $v_h=\, - \Delta_h U^{n, \sigma_n}$ and proceed in a similar manner as above to get
\begin{eqnarray}\label{2e86}
\|\nabla U^{n}\|  & \le & C \, \Big( 1 + \|\nabla U^0\| \Big), \nonumber
\end{eqnarray}
where $C = 2 \, E_{\alpha}\Big(\frac{11}{2 \, \mu_1} t_{n}^{\alpha}\Big) \, \max \left\lbrace 1, \, \frac{1}{\sqrt{m_1}} \Big( \frac{11\, T^{\alpha_1}}{4\, \min\limits_{1 \le s \le m} (\mu_s \Gamma{(1+ \alpha_s)})} + 1\Big) \max\limits_{1 \le j \le n} \|f^{j-\sigma_n}\| \right\rbrace$. \\
The proof is complete.
\end{proof}

\begin{remark}\label{7rmk3}
	The existence-uniqueness result for the fully-discrete solution $U^n$ of \eqref{7e6} follows, in similar lines as given in~\cite[Theorem 4.1]{[sp1]}, by using \textit{a priori} bound and a consequence of the Brouwer's fixed point theorem \cite[P.237]{[vth]}
\end{remark}
\section{Error Estimates}\label{7err_est}
In order to derive \emph{a priori} error estimates, we require some additional regularity on solution $u$. Therefore, we assume that solution $u$ satisfies the following conditions.  
\begin{eqnarray}\label{7e60}
&{^{c}_{0}{D}_{t} \, u}& \! \! \! \in {L^{\infty}(0, T; {H^2(\Omega)})}, \qquad   u \in {L^{\infty}(0, T; {H^1_0(\Omega) \cap H^3(\Omega)})}, \nonumber \\
&\! \! \! \mbox{and}&  \, \| \partial_t^{q} u\|_3 \le C \, (1+t^{\alpha_1 - q}), \quad \mbox{for} \;\, q = 0,1,2,3. 
\end{eqnarray}

The following lemmas and definition will be useful to derive error estimates.

\begin{lemma}\label{7l3}
	\cite[P.9 (32)]{[hr12]} If $u \in C^1[0,T] \cap C^2(0,T]$ and $\| \partial_t^{q} u\| \le C \, (1+t^{\alpha_1 - q})$, for $q = 0,1,2,3$, then, for $n=1, \ldots, N,$ 	
	\begin{eqnarray}\label{7e123}
	 t_{n-\sigma_n}^{\alpha_1} \, \|u^{n, \sigma_n} - u^{n-\sigma_n} \| & \le & C N^{-\min\{r \alpha_1, \, 2\}}. \nonumber
	\end{eqnarray}
\end{lemma}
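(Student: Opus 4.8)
The plan is to reduce the statement to a standard Taylor-expansion / interpolation-remainder estimate, exactly as in the single-term setting of~\cite{[hr12]}, but keeping track of the fact that the singularity exponent is $\alpha_1$ (the \emph{largest} fractional order) rather than a single $\alpha$. First I would write out the quantity of interest: by definition $u^{n,\sigma_n} = (1-\sigma_n)u^n + \sigma_n u^{n-1}$ is the value at time $t_{n-1}+(1-\sigma_n)\tau_n = t_{n-\sigma_n}$ of the \emph{linear} interpolant of $u$ through the nodes $t_{n-1}$ and $t_n$, whereas $u^{n-\sigma_n} = u(t_{n-\sigma_n})$ is the exact value. Hence $u^{n,\sigma_n}-u^{n-\sigma_n}$ is precisely the linear-interpolation error at the interior point $t_{n-\sigma_n}\in(t_{n-1},t_n)$, which admits the representation
\begin{equation*}
u^{n,\sigma_n}-u^{n-\sigma_n} = \tfrac12\,(t_{n-\sigma_n}-t_{n-1})(t_n-t_{n-\sigma_n})\,u''(\theta_n), \qquad \theta_n\in(t_{n-1},t_n),
\end{equation*}
valid for $n\ge 2$ where $u\in C^2$ on $(t_{n-1},t_n)\subset(0,T]$; the $n=1$ case is handled separately because $0$ is an endpoint.

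For $n\ge 2$ the two factors are each bounded by $\tau_n$, so $\|u^{n,\sigma_n}-u^{n-\sigma_n}\|\le \tfrac12\tau_n^2\sup_{\eta\in(t_{n-1},t_n)}\|u''(\eta)\|$. Using the regularity hypothesis $\|\partial_t^2 u(\eta)\|\le C(1+\eta^{\alpha_1-2})$ together with $\eta\ge t_{n-1}$ and the mesh comparison $t_n\le 2^r t_{n-1}$ from~\eqref{7e20a}, one gets $\sup_{\eta\in(t_{n-1},t_n)}\|u''(\eta)\|\le C\,t_{n-\sigma_n}^{\alpha_1-2}$ (absorbing the $1+$ into the singular term since $t\le T$). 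Then by the graded-mesh step-size bound~\eqref{7e20}, $\tau_n\le C\,N^{-1}t_n^{1-1/r}\le C\,N^{-1}t_{n-\sigma_n}^{1-1/r}$, whence
\begin{equation*}
t_{n-\sigma_n}^{\alpha_1}\,\|u^{n,\sigma_n}-u^{n-\sigma_n}\| \le C\,N^{-2}\,t_{n-\sigma_n}^{\alpha_1}\,t_{n-\sigma_n}^{2-2/r}\,t_{n-\sigma_n}^{\alpha_1-2} = C\,N^{-2}\,t_{n-\sigma_n}^{2\alpha_1-2/r}.
\end{equation*}
If $r\ge 1/\alpha_1$ the exponent $2\alpha_1-2/r\ge 0$, so $t_{n-\sigma_n}^{2\alpha_1-2/r}\le T^{2\alpha_1-2/r}$ is bounded and the right-hand side is $O(N^{-2})$; if $1\le r<1/\alpha_1$ one instead uses the cruder bound $\tau_n\le C\,r\,T\,N^{-r}n^{r-1}$ from~\eqref{7e20} on the first factor and keeps the singular $t_{n-\sigma_n}^{\alpha_1-2}\asymp(t_n)^{\alpha_1-2}\asymp (n/N)^{r(\alpha_1-2)}$, giving after simplification a bound $C\,N^{-r\alpha_1}$ (the standard outcome on a graded mesh in the under-graded regime). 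Combining the two regimes yields the claimed $C\,N^{-\min\{r\alpha_1,\,2\}}$.

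For $n=1$ I would argue directly: $u^{1,\sigma_1}-u^{1-\sigma_1} = (1-\sigma_1)(u^1-u^0) - (u(t_{1-\sigma_1})-u^0)$, and estimating $u^1-u^0 = \int_0^{t_1}u'(\eta)\,d\eta$ and $u(t_{1-\sigma_1})-u^0 = \int_0^{t_{1-\sigma_1}}u'(\eta)\,d\eta$ using $\|u'(\eta)\|\le C(1+\eta^{\alpha_1-1})$ gives $\|u^{1,\sigma_1}-u^{1-\sigma_1}\|\le C\,t_1^{\alpha_1}$, and since $t_1 = T N^{-r}$ this is $C\,N^{-r\alpha_1}\le C\,N^{-\min\{r\alpha_1,2\}}$, while $t_{1-\sigma_1}^{\alpha_1}\le T^{\alpha_1}$ is bounded, as required. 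The main obstacle, and the only place any real care is needed, is the case split on the grading parameter $r$ relative to $1/\alpha_1$: one must choose the right pair of inequalities from~\eqref{7e20}--\eqref{7e20a} in each regime so that the powers of $t_{n-\sigma_n}$ (equivalently of $n/N$) collect with a nonnegative exponent after multiplying by $t_{n-\sigma_n}^{\alpha_1}$; everything else is a routine Taylor estimate identical to~\cite[P.9 (32)]{[hr12]} with $\alpha$ replaced by $\alpha_1$.
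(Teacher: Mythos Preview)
The paper does not supply its own proof of this lemma; it is simply quoted from~\cite[P.9 (32)]{[hr12]}. Your argument---recognising $u^{n,\sigma_n}$ as the linear interpolant of $u$ at $t_{n-\sigma_n}$, invoking the quadratic Taylor remainder, and then balancing the step-size bound~\eqref{7e20} against the singularity $\|u''(\eta)\|\le C\eta^{\alpha_1-2}$ via the mesh comparison~\eqref{7e20a}, with a separate treatment of $n=1$ using only $\|u'\|$---is exactly the standard route and is correct; it is presumably the argument behind the cited inequality in~\cite{[hr12]}, the only change being that $\alpha$ there is replaced by the dominant exponent $\alpha_1$ here. One small technical remark: since $u(\cdot,t)$ takes values in a function space, the Lagrange mean-value form ``$u''(\theta_n)$'' is not literally available; you should instead write the interpolation error via the integral remainder, which gives the same bound $\|u^{n,\sigma_n}-u^{n-\sigma_n}\|\le \tfrac12\tau_n^2\sup_{(t_{n-1},t_n)}\|u''\|$ without issue.
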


\begin{lemma}\label{7l8}
	 If $u \in C^1[0,T] \cap C^2(0,T]$ and $\| \partial_t^{q} u\| \le C \, (1+t^{\alpha_1 - q})$, for $q = 0,1,2,3$, then, for $n=2, \ldots, N,$ 	
	\begin{eqnarray}\label{7e124}
	  \|u^{n - \sigma_n} - \widehat{u}^{n - \sigma_n} \| & \le & C N^{-\min\{r \alpha_1, \, 2\}}. \nonumber
	\end{eqnarray}
\end{lemma}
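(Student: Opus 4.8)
The plan is to estimate the difference $u^{n-\sigma_n} - \widehat{u}^{n-\sigma_n}$ by recognizing $\widehat{u}^{n-\sigma_n}$ as the value at $t_{n-\sigma_n}$ of the linear polynomial $L_{1,n-1}u$ that interpolates $u$ at the two previous nodes $t_{n-2}$ and $t_{n-1}$. Indeed, writing out the linear interpolant through $(t_{n-2},u^{n-2})$ and $(t_{n-1},u^{n-1})$ and evaluating at $t_{n-\sigma_n} = t_{n-1} + (1-\sigma_n)\tau_n$ reproduces exactly the coefficients $\frac{\tau_{n-1}+(1-\sigma_n)\tau_n}{\tau_{n-1}}$ and $-\frac{(1-\sigma_n)\tau_n}{\tau_{n-1}}$ appearing in the definition of $\widehat{u}^{n-\sigma_n}$. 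So the quantity to bound is just the extrapolation error of linear interpolation, evaluated slightly outside the interpolation interval.

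First I would invoke the standard remainder formula for linear interpolation: for $t \in [t_{n-2}, t_{n-\sigma_n}]$,
\begin{eqnarray}
u(t) - (L_{1,n-1}u)(t) = \frac{1}{2}\,(t - t_{n-2})(t - t_{n-1})\, u''(\xi), \nonumber
\end{eqnarray}
for some $\xi$ in the smallest interval containing $t_{n-2}, t_{n-1}, t$. Evaluating at $t = t_{n-\sigma_n}$ gives
\begin{eqnarray}
\|u^{n-\sigma_n} - \widehat{u}^{n-\sigma_n}\| \le \tfrac{1}{2}\,\big(t_{n-\sigma_n} - t_{n-2}\big)\,\big(t_{n-\sigma_n} - t_{n-1}\big)\,\sup_{\eta \in (t_{n-2}, t_n)}\|u''(\eta)\|. \nonumber
\end{eqnarray}
Since $t_{n-\sigma_n} - t_{n-1} = (1-\sigma_n)\tau_n \le \tau_n$ and $t_{n-\sigma_n} - t_{n-2} \le \tau_{n-1} + \tau_n \le (1+C_r)\tau_n$ by \eqref{7e20b}, the geometric factor is $O(\tau_n^2)$. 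For the derivative factor I would split into the cases $n=2$ and $n\ge 3$: when $n=2$ the supremum is over $(0,t_2)$ and the regularity bound $\|u''(\eta)\| \le C(1+\eta^{\alpha_1-2})$ is singular at the origin, so one must instead keep the weight and argue as in Lemma \ref{7l3}; when $n \ge 3$ the interval $(t_{n-2},t_n)$ is bounded away from $0$ and, using \eqref{7e20a} to compare $t_{n-2}$ with $t_n$, the supremum is $\le C\,t_n^{\alpha_1-2} \le C\,t_{n-\sigma_n}^{\alpha_1-2}$.

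Combining, for $n\ge 3$ one gets $\|u^{n-\sigma_n}-\widehat{u}^{n-\sigma_n}\| \le C\,\tau_n^2\, t_{n-\sigma_n}^{\alpha_1-2}$, and then \eqref{7e20} gives $\tau_n \le C\,N^{-1}t_n^{1-1/r}$, so $\tau_n^2 t_{n-\sigma_n}^{\alpha_1-2} \le C\,N^{-2} t_n^{2-2/r} t_n^{\alpha_1-2} = C\,N^{-2} t_n^{\alpha_1 - 2/r}$; distinguishing $r\alpha_1 \le 2$ (exponent of $t_n$ nonnegative, bounded by $T$) from $r\alpha_1 > 2$ (use $t_n \ge t_1 = T N^{-r}$) yields the claimed $C\,N^{-\min\{r\alpha_1,\,2\}}$. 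The $n=2$ case is handled exactly as Lemma \ref{7l3}: the factor $t_{n-\sigma_n}^{\alpha_1}$ on the left there plays the role of absorbing the boundary singularity, and the same computation applies here since $\widehat{u}^{2-\sigma_2}$ is again the relevant linear extrapolant. The main obstacle is precisely this $n=2$ endpoint case, where the weak singularity of $u''$ at $t=0$ must be controlled by the graded-mesh scaling $\tau_1 = TN^{-r}$ rather than by a naive supremum bound; everything else is a routine interpolation-remainder estimate.
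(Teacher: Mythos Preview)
Your approach is correct and matches the paper's (one-line) proof, which just cites Taylor's theorem together with the graded-mesh estimates \eqref{7e20} and \eqref{7e20a}. The only point to tighten is the $n=2$ case: the pointwise Lagrange remainder with $u''$ is not directly usable there because $\|u''(\eta)\|$ blows up as $\eta\to 0^+$, and the analogy with Lemma~\ref{7l3} is imperfect since that lemma carries an extra weight $t_{n-\sigma_n}^{\alpha_1}$ on the left; instead one writes $u(t)-u(0)=\int_0^t u'(s)\,ds$, uses $\|u'(s)\|\le C(1+s^{\alpha_1-1})$ and the bounded ratio $\tau_2/\tau_1\le C_r$ to obtain $\|u^{2-\sigma_2}-\widehat{u}^{\,2-\sigma_2}\|\le C\,t_2^{\alpha_1}\le C\,N^{-r\alpha_1}$, which is precisely what your closing remark about the graded-mesh scaling $\tau_1=TN^{-r}$ is pointing toward.
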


\begin{proof}
	The proof follows from the Taylor's theorem, and the estimates \eqref{7e20} and \eqref{7e20a}.
\end{proof}

\begin{definition}\cite[P.8]{[vth]}
	The Ritz-projection is a map  $R_h : H^1_0(\Omega) \rightarrow V_h$ such that
	\begin{equation*}\label{7e62}
	(\nabla w, \, \nabla v) \, = \, (\nabla R_h w, \, \nabla v), \quad \forall w \in  H^1_0(\Omega) \: \mbox{and}  \; \;  \forall v \in V_h.
	\end{equation*}
\end{definition}

\indent Utilizing \eqref{7e60}, one can easily prove that $R_h $ satisfies $\|{\nabla R_h u}\| \, \le \, C$. \\

\indent Since $V_h$ is the space of linear polynomials, it is well known \cite[Lemma 1.1]{[vth]} that
\begin{equation}\label{7e61}
\begin{split}
\|{w-R_hw} \| \, + h \, \|{\nabla (w-R_hw)}\| \, \le& \, Ch^2 \, \|{\Delta w}\|, \quad \forall w \in H^2 \cap H^1_0.\\
\end{split}
\end{equation}

The next theorem is one of the main results of this article. In this theorem, we state and prove the convergence results for the fully-discrete solution. For this, we split the error ($u^n - U^n$) as follows:
\begin{equation}\label{7e61n}
\begin{split}
u^n - U^n = u^n - R_hu^n + R_hu^n - U^n = \rho ^n + \theta ^n,
\end{split}
\end{equation}
where \  $ \rho ^n := u^n - R_hu^n$ \ and \ $\theta ^n := R_hu^n - U^n.$\\

\begin{theorem}\label{7th5}
	For $1 \le n \le N$, let $u^{n}$ and $U^{n}$ be the solutions of \eqref{7e1} and \eqref{7e6}, respectively. Then
	\begin{eqnarray}
	\label{7e21a}
	\max_{1 \le n \le N} \|u^{n} - U^{n}\|& \le & C \, \big(h^2 + N^{-\min \left\lbrace 2, \, r \alpha \right\rbrace }\big), \\
	\label{7e21}
	\max_{1 \le n \le N} \|\nabla (u^{n} - U^{n}) \| & \le & C \, \big(h + N^{-\min \left\lbrace 2, \, r \alpha \right\rbrace }\big).
	\end{eqnarray}
\end{theorem}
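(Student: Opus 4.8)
The plan is to work with the error splitting $u^n - U^n = \rho^n + \theta^n$ already introduced, controlling $\rho^n = u^n - R_h u^n$ by the standard Ritz-projection estimate \eqref{7e61} and devoting the real work to $\theta^n = R_h u^n - U^n \in V_h$. First I would write down the error equation satisfied by $\theta^n$: subtract the fully-discrete scheme \eqref{7e6} from the weak formulation evaluated at $t_{n-\sigma_n}$, insert $R_h u$, and use the defining property of $R_h$ to kill the $(\nabla\rho,\nabla v_h)$ terms. This produces, for $n\ge 2$,
\begin{equation*}
\left(D_N\theta^{n-\sigma_n},v_h\right) + a\big(l(\widehat{U}^{n-\sigma_n})\big)(\nabla\theta^{n,\sigma_n},\nabla v_h) = (\chi^n,v_h),\qquad \forall v_h\in V_h,
\end{equation*}
where $\chi^n$ collects: the temporal truncation error $\,^c_0D_{t_{n-\sigma_n}}u - D_N u^{n-\sigma_n}$ (bounded by Theorem~\ref{7l6}), the term $\big(a(l(u^{n-\sigma_n}))-a(l(\widehat U^{n-\sigma_n}))\big)\Delta u$ coming from the nonlocal coefficient and the linearization of the argument, the spatial-consistency contribution $-(D_N\rho^{n-\sigma_n},v_h)$, and the perturbation $a(\cdot)(\nabla(u^{n-\sigma_n}-u^{n,\sigma_n}),\nabla v_h)$ measuring the difference between the value at $t_{n-\sigma_n}$ and the interpolated value $u^{n,\sigma_n}$. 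The $n=1$ step uses \eqref{7e6:1.2} and is handled analogously.

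Next I would choose $v_h = \theta^{n,\sigma_n}$, apply Hypothesis H1 to bound $a(\cdot)(\nabla\theta^{n,\sigma_n},\nabla\theta^{n,\sigma_n})\ge m_1\|\nabla\theta^{n,\sigma_n}\|^2\ge 0$ from below, invoke Lemma~\ref{7l1} to get $\tfrac12 D_N\|\theta^{n-\sigma_n}\|^2 \le (D_N\theta^{n-\sigma_n},\theta^{n,\sigma_n})$, and then estimate $(\chi^n,\theta^{n,\sigma_n})$ by Cauchy--Schwarz and Young's inequality. The Lipschitz hypothesis H2 together with $|l(w)|\le C\|w\|$ converts the coefficient-difference term into $C\|u^{n-\sigma_n}-\widehat U^{n-\sigma_n}\|\,\|\theta^{n,\sigma_n}\|$; writing $u^{n-\sigma_n}-\widehat U^{n-\sigma_n} = (u^{n-\sigma_n}-\widehat u^{n-\sigma_n}) + \widehat\rho^{n-\sigma_n} + \widehat\theta^{n-\sigma_n}$ and using Lemma~\ref{7l8}, \eqref{7e61}, and \eqref{7e20b} bounds this by $C\big(N^{-\min\{r\alpha_1,2\}} + h^2 + \|\theta^{n-1}\|+\|\theta^{n-2}\|\big)$. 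Collecting everything, $\theta$ satisfies an inequality of exactly the form \eqref{7e105} with $\sum\lambda_i\le\Lambda$ (the $\|\theta^{k-1}\|$-type terms feed the convolution sum), $\zeta^n = C(h^2 + N^{-\min\{r\alpha_1,2\}})$ from consistency, and $\xi^n$ absorbing the cross terms; the mesh restriction $\max_n\tau_n\le(\tfrac{11}{2}\mu\Lambda)^{-1/\alpha_1}$ is met for $N$ large since $\tau_n\to 0$. Applying the discrete fractional Grönwall inequality Theorem~\ref{7l5}, and bounding $\sum_{j=1}^k p_{k,j}$ by Lemma~\ref{7l7} (with $\gamma_s$ chosen so that $t^{-\alpha_s}$-type factors in $\chi^n$, carried in from Theorem~\ref{7l6}, are summable — this is why the $t_{n-\sigma_n}^{-\alpha_s}$ weight there matters), yields $\max_n\|\theta^n\|\le C(h^2+N^{-\min\{2,r\alpha_1\}})$; the triangle inequality with \eqref{7e61} then gives \eqref{7e21a}.

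For the gradient estimate \eqref{7e21}, I would instead test with $v_h=-\Delta_h\theta^{n,\sigma_n}$ after rewriting the scheme via the discrete Laplacian $\Delta_h$, so that the coefficient term becomes $a(\cdot)\|\nabla\theta^{n,\sigma_n}\|^2$ controlled below by $m_1\|\nabla\theta^{n,\sigma_n}\|^2$ and the left side produces $\tfrac12 D_N\|\nabla\theta^{n-\sigma_n}\|^2$; the right-hand data are bounded in $H^1$-type norms using the regularity \eqref{7e60} and \eqref{7e61}, and a second application of Theorem~\ref{7l5} closes it, with the extra factor $1/\sqrt{m_1}$ appearing exactly as in the a priori bound proof. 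The main obstacle is the careful bookkeeping of the nonlocal/linearization term: one must verify that replacing $l(U^{n,\sigma_n})$ (resp.\ $l(\widehat U^{n-\sigma_n})$) by $l(u^{n-\sigma_n})$ and handling the argument mismatch $\widehat u^{n-\sigma_n}$ vs.\ $u^{n-\sigma_n}$ produces only $O(N^{-\min\{r\alpha_1,2\}})$ consistency error plus lower-order $\theta$-terms that fit the convolution structure of \eqref{7e105} — this is where the a priori bound of Theorem~\ref{2th4} (giving boundedness of $U^n$, hence of the sequences $\xi^n,\zeta^n$) and the two-point estimates \eqref{7e20a}--\eqref{7e20b} on the graded mesh are essential, and it is the step the author flags as "more complicated" than the single-term case.
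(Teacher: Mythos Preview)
Your proposal is correct and matches the paper's proof essentially step for step: error splitting via $R_h$, error equation for $\theta^n$, testing with $\theta^{n,\sigma_n}$ and Lemma~\ref{7l1}, consistency via Theorem~\ref{7l6} and Lemmas~\ref{7l3}--\ref{7l8}, then the discrete Gr\"onwall inequality Theorem~\ref{7l5} combined with Lemma~\ref{7l7} to sum the $t_{n-\sigma_n}^{-\alpha_s}$-weighted truncation terms, and finally $v_h=-\Delta_h\theta^{n,\sigma_n}$ for the gradient bound. The only small bookkeeping differences are that the paper writes the linearization error as $C\|\theta^{n-1,\sigma_{n-1}}\|$ (rather than $\|\theta^{n-1}\|+\|\theta^{n-2}\|$) so that it slots directly into the $(v^{k,\sigma_k})^2$ structure of \eqref{7e105}, handles the $u^{n,\sigma_n}-u^{n-\sigma_n}$ consistency in $(\Delta u,\theta)$ rather than gradient form, and does not actually invoke Theorem~\ref{2th4} (H1 alone gives the uniform bound on $a(\cdot)$).
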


\begin{proof}
  For any $v_h \in V_h$, the estimate for $\theta ^n$ is given by
  \begin{eqnarray}\label{7e22}
  \big( {D}_{N} \theta^{1-\sigma_1}, \! \! \! \! &v_h& \! \! \! \! \! \! \! \big) \, + \, a\big(l(U^{1, \sigma_1})\big) \, (\nabla \theta^{1, \sigma_1}, \nabla v_h) \nonumber\\
  & = & \! \! \! \! \! \big( {D}_{N} R_hu^{1-\sigma_1} - \, ^{c}_{0}{D}_{t_{1-\sigma_1}}u , v_h \big) - a\big(l(u^{1-\sigma_1})\big) (\Delta u^{1, \sigma_1} - \Delta u^{1-\sigma_1}, v_h) \nonumber \\
  & \quad - & \! \! \! \!  \big\{ a\big(l(U^{1, \sigma_1})\big) - a\big(l(u^{1-\sigma_1})\big) \big\} (\Delta u^{1, \sigma_1}, v_h),
  \end{eqnarray}
  
  \noindent and for $n \ge 2$,
  \begin{eqnarray}\label{7e23}
  \big( {D}_{N} \theta^{n-\sigma_n}, \! \! \! \! &v_h& \! \! \! \! \! \! \! \big) \, + \, a\big(l(\widehat{U}^{n - \sigma_n})\big) \, (\nabla \theta^{n, \sigma_n}, \nabla v_h) \nonumber\\
  & = & \! \! \! \! \! \big( {D}_{N} R_hu^{n-\sigma_n} - \, ^{c}_{0}{D}_{t_{n-\sigma_n}}u , v_h \big) - a\big(l(u^{n-\sigma_n})\big) (\Delta u^{n, \sigma_n} - \Delta u^{n-\sigma_n}, v_h) \nonumber \\
  & \quad - & \! \! \! \!  \big\{ a\big(l(\widehat{U}^{n - \sigma_n})\big) - a\big(l(u^{n-\sigma_n})\big) \big\} (\Delta u^{n, \sigma_n}, v_h).
  \end{eqnarray}

  \noindent Also, for $n \ge 2$, Lipschitz continuity of the function $a$ gives
   \begin{eqnarray}\label{7e28}
   \big\| a\big(l(\widehat{U}^{n - \sigma_n})\big) - a\big(l(u^{n-\sigma_n})\big) \big\| & \le & \| \widehat{U}^{n - \sigma_n} - u^{n-\sigma_n} \| \nonumber \\
   & \le & \| \widehat{U}^{n - \sigma_n} - \widehat{u}^{n-\sigma_n} \| + \| \widehat{u}^{n-\sigma_n} - u^{n-\sigma_n} \| \nonumber \\
   & \le & C \, \big(h^2  + \| \theta^{n-1, \sigma_{n-1}}\| + N^{-\min\{r \alpha_1, \, 2\}} \big), 
   \end{eqnarray}
   where we have used Lemma~\ref{7l8}. \\
   
   \noindent Now, we choose $v_h = \theta^{1, \sigma_1}$ and $v_h = \theta^{n, \sigma_n}$ in \eqref{7e22} and \eqref{7e23}, respectively to get
   \begin{eqnarray}\label{7e22a}
   \big( {D}_{N} \theta^{1-\sigma_1}, \! \!  &\theta^{1, \sigma_1}& \! \! \! \! \! \! \big) \, + \, a\big(l(U^{1, \sigma_1})\big) \, (\nabla \theta^{1, \sigma_1}, \nabla \theta^{1, \sigma_1}) \nonumber\\
   & = \quad &  \! \! \! \! \! \! \! \! \! \! \big( {D}_{N} R_hu^{1-\sigma_1} - \, ^{c}_{0}{D}_{t_{1-\sigma_1}}u , \theta^{1, \sigma_1} \big) - a\big(l(u^{1-\sigma_1})\big) (\Delta u^{1, \sigma_1} - \Delta u^{1-\sigma_1}, \theta^{1, \sigma_1}) \nonumber \\
   & & \;  -  \, \big\{ a\big(l(U^{1, \sigma_1})\big) - a\big(l(u^{1-\sigma_1})\big) \big\} (\Delta u^{1, \sigma_1}, \theta^{1, \sigma_1}),
   \end{eqnarray}
   
   \noindent and for $n \ge 2$,
    \begin{eqnarray}\label{7e23a}
    \big( {D}_{N} \theta^{n-\sigma_n}, \! \!  &\theta^{n, \sigma_n}& \! \! \! \! \! \!  \big) \, + \, a\big(l(\widehat{U}^{n - \sigma_n})\big) \, (\nabla \theta^{n, \sigma_n}, \nabla \theta^{n, \sigma_n}) \nonumber\\
   & = \quad &  \! \! \! \! \! \! \! \! \! \! \big( {D}_{N} R_hu^{n-\sigma_n} - \, ^{c}_{0}{D}_{t_{n-\sigma_n}}u , \theta^{n, \sigma_n} \big) - a\big(l(u^{n-\sigma_n})\big) (\Delta u^{n, \sigma_n} - \Delta u^{n-\sigma_n}, \theta^{n, \sigma_n}) \nonumber \\
    & & \;  -  \,  \big\{ a\big(l(\widehat{U}^{n - \sigma_n})\big) - a\big(l(u^{n-\sigma_n})\big) \big\} (\Delta u^{n, \sigma_n}, \theta^{n, \sigma_n}).
    \end{eqnarray}
    
    \noindent Using the Hypotheses~H1-H2, the Cauchy-Schwartz inequality, \eqref{7e60}, \eqref{7e61}, Lemma~\ref{7l1}, we have
    \begin{eqnarray}\label{7e24}
      \! \! \! \! \! {D}_{N} \|\theta^{1-\sigma_1}\|^2 \le C \, \|\theta^{1, \sigma_1}\|^2 +  C \Big\{ \|{D}_{N} R_hu^{1-\sigma_1} \! \! \! \!  &-& \! \! \! ^{c}_{0}{D}_{t_{1-\sigma_1}}R_hu \| + \|u^{1, \sigma_1} - u^{1-\sigma_1}\| \nonumber \\
     & + & \! \! \!  \|\Delta u^{1, \sigma_1} - \Delta u^{1-\sigma_1}\| +h^2 \Big\} \|\theta^{1, \sigma_1}\|. \quad
    \end{eqnarray}
    
    \noindent Using the same results together with estimate \eqref{7e28}, for $n \ge 2$, one can obtain
    \begin{eqnarray}\label{7e30}
    {D}_{N} \|\theta^{n-\sigma_n}\|^2 & \le & C \, \big\{ \|\theta^{n, \sigma_n}\|^2 + \| \theta^{n-1, \sigma_{n-1}}\|^2 \big\}  + C \, \Big\{ \|{D}_{N} R_hu^{n-\sigma_n} - \, ^{c}_{0}{D}_{t_{n-\sigma_n}}R_hu \| \nonumber \\
    & & \hspace{1cm} + \; \|\Delta u^{n, \sigma_n} - \Delta u^{n-\sigma_n}\| +  \; h^2 + N^{-\min\{r \alpha_1, \, 2\}} \Big\} \, \|\theta^{n, \sigma_n}\|.       
    \end{eqnarray}
        
    \noindent Moreover, from Theorem~\ref{7l6} and Lemma~\ref{7l3}, it follows that
    \begin{eqnarray}\label{7e32}
     & & \|{D}_{N} R_hu^{n-\sigma_n} - \, ^{c}_{0}{D}_{t_{n-\sigma_n}}R_hu \| + \|\Delta u^{n, \sigma_n} - \Delta u^{n-\sigma_n}\|  \nonumber \\ 
     & & \hspace{1.5cm} \le  \, C \, N^{-\min\{r \alpha_1, \, 2\}} \, \Big\{\sum_{s=1}^{m} \frac{\mu_s}{\Gamma{(1-\alpha_s)}} \, t^{-\alpha_s}_{n - \sigma_n} + \, t_{n-\sigma_n}^{- \alpha_1} \Big\}. \nonumber \\
     & & \hspace{1.5cm} \le  \, C \, N^{-\min\{r \alpha_1, \, 2\}} \, \Big\{\sum_{s=1}^{m} \mu_s \, t^{-\alpha_s}_{n - \sigma_n} \Big\}. \nonumber \\
     & & \hspace{1.5cm} \le  \, C \, N^{r \, l_N} \, N^{-\min\{r \alpha_1, \, 2\}} \, \Big\{\sum_{s=1}^{m} \mu_s \, t^{l_N -\alpha_s}_{n} \Big\}. \nonumber \\
      & &  \hspace{1.5cm} \le  \, C_1 \, N^{-\min\{r \alpha_1, \, 2\}} \, \Big\{\sum_{s=1}^{m} \mu_s \, \frac{\Gamma(1 + l_N)}{\Gamma(1 + l_N - \alpha_s)} \, t^{l_N -\alpha_s}_{n} \Big\},
     \end{eqnarray}
    where $C_1 = C \, N^{r \, l_N} \, \frac{\max\limits_{1 \le s \le m} \Gamma(1 + l_N - \alpha_s)}{\Gamma(1 + l_N)}$.\\
    
    \noindent Similarly, for $n =1$, we can have 
    \begin{eqnarray}\label{7e33}
    & & \|{D}_{N} R_hu^{1-\sigma_1} - \, ^{c}_{0}{D}_{t_{1-\sigma_1}}R_hu \| +  \|u^{1, \sigma_1} - u^{1-\sigma_1}\| + \|\Delta u^{1, \sigma_1} - \Delta u^{1-\sigma_1}\|  \nonumber \\ 
    & &  \hspace{1.5cm} \le  \, C_1 \, N^{-\min\{r \alpha_1, \, 2\}} \, \Big\{\sum_{s=1}^{m} \mu_s \, \frac{\Gamma(1 + l_N)}{\Gamma(1 + l_N - \alpha_s)} \, t^{l_N -\alpha_s}_{1} \Big\}.
    \end{eqnarray}

    \noindent Now, applying Theorem~\ref{7l5} into \eqref{7e24} and \eqref{7e30}, and then using  \eqref{7e32}, \eqref{7e33} to arrive at
    \begin{eqnarray}\label{7e31} 
    \|\theta^{n}\| & \le & C \, \Big[ C_1 \, N^{-\min\{r \alpha_1, \, 2\}} \,  \max_{1 \le k \le n} \Big\{ \sum_{j=1}^{k} \, \Big(\sum_{s=1}^{m} \mu_s \, \frac{\Gamma(1 + l_N)}{\Gamma(1 + l_N - \alpha_s)} \, t^{l_N -\alpha_s}_{n} \Big) \, p_{k,j} \Big\}  \nonumber \\
    & & \hspace{3cm} + \,  \|\theta^0\| \, + \, C_1 \, \big( h^2 + N^{-\min\{r \alpha_1, \, 2\}} \big) \max_{1 \le k \le n} \sum_{j=1}^{k} p_{k,j}  \Big]. \nonumber
    \end{eqnarray}
    
    \noindent We utilize Lemma~\ref{7l7} with $\gamma_s = l_N$ and $\gamma_s = \alpha_s$ into above inequality and choose $U^0 = R_h u^0$ to obtain    \begin{eqnarray}\label{7e34} 
    \|\theta^{n}\| & \le & C \,  \big( \|\theta^0\| + h^2 + N^{-\min\{r \alpha_1, \, 2\}} \big).
    \end{eqnarray}
    
    \noindent In the end, application of the Triangle inequality, and  the estimates \eqref{7e61}, \eqref{7e34} yields \eqref{7e21a}.
    
    \noindent  In order to prove the result for $H_0^1$-norm, we rewrite \eqref{7e22} and \eqref{7e23} using the definition of $\Delta_h$~\cite[P.10]{[vth]} and then take $v_h=\, - \Delta_h \theta^{n, \sigma_n}$. After that we use similar arguments as above together with the Poincar$\acute{e}$ inequality to get \eqref{7e21}.

\end{proof}

\section{Numerical Experiments}\label{7NE}
In this section, we perform two different numerical experiments with known exact solution to confirm our theoretical estimates. In both numerical experiments, we consider $[0,1]$ as the time interval and we take $\mu_s=1,$ ($\forall \, s=1, \ldots, m$). In order to achieve optimal order of convergence rate, we choose mesh grading parameter $r=\frac{2}{\alpha_1}.$ Spatial domain $\Omega$ is partitioned with $\big(M_s+1\big)$ node points in each direction and the time interval is partitioned into $N$ number of sub-intervals. We set $M_s=N$ and calculate error for different values of $N$ to get the rate of convergence in time direction. Similarly, We set $N=M_s$ and calculate error for different values of $M_s$ to conclude the rate of convergence in spatial direction. \\

\begin{example}\label{7E2}
In our first example, we consider \eqref{7e1} with spatial domain $\Omega = (0, \pi)$, $a(w)=3+\sin w$ and $f(x,t)$ is chosen such that the analytical solution of given PDE is $u(x,t)=(t^3+t^{\alpha_1}) \sin x$. 	
\end{example}

\noindent Table~\ref{7T5} shows the order of convergence in the temporal direction in $L^{\infty}(0,T;L^2(\Omega))$ norm on a graded mesh. From this table, it can be seen that using graded mesh, we can achieve $O(N^{-2})$ convergence rate in the temporal direction. The order of convergence in the spatial direction in $L^2(\Omega)$ and $H^1(\Omega)$ norms are given in Tables~\ref{7T6}~and~\ref{7T7}, respectively. From these, one can observe that the numerical convergence results are in accordance to our theoretical convergence estimates. 

\begin{center}
	\begin{tiny}
		\begin{table}[h!]
			\begin{center}
				\begin{tabular}{|c|c|c|c|c|}
					\hline
					\multirow{3}{*}{\large $N$} & \multicolumn{2}{c|}{$\alpha_1 = 0.4$, $\alpha_2 = 0.37$} & \multicolumn{2}{c|}{$\alpha_1 = 0.7$, $\alpha_2 = 0.68$, $\alpha_3 = 0.66$} \\
					& \multicolumn{2}{c|}{$\alpha_3 = 0.35$, $\alpha_4 = 0.33$} & \multicolumn{2}{c|}{$\alpha_4 = 0.64$, $\alpha_5 = 0.62$} \\
					\cline{2-5}
					& Error & OC & Error & OC  \\
					\hline
					$2^6$ & 1.50E-02 & 1.918073 & 2.42E-03 & 1.978989  \\
					\hline
					$2^7$ & 3.97E-03 & 1.967703 & 6.14E-04 &  1.993808 \\
					\hline
					$2^8$ & 1.01E-03 & 1.982828 & 1.54E-04 & 2.000700 \\
					\hline
					$2^9$ & 2.56E-04 & - & 3.85E-05 & - \\
					\hline 
			\end{tabular}
			\end{center}
			\caption {\emph {Error and order of convergence  in $L^{\infty}(0,T;L^2(\Omega))$ norm in temporal direction on a graded mesh for Example \ref{7E2}.}}
			\label{7T5}
		\end{table}
	\end{tiny}
\end{center}

\begin{center}
	\begin{tiny}
		\begin{table}[h!]
			\begin{center}	
				\begin{tabular}{|c|c|c|c|c|}
					\hline
					\multirow{3}{*}{\large $M_s$} & \multicolumn{2}{c|}{$\alpha_1 = 0.4$, $\alpha_2 = 0.37$} & \multicolumn{2}{c|}{$\alpha_1 = 0.7$, $\alpha_2 = 0.68$, $\alpha_3 = 0.66$} \\
					& \multicolumn{2}{c|}{$\alpha_3 = 0.35$, $\alpha_4 = 0.33$} & \multicolumn{2}{c|}{$\alpha_4 = 0.64$, $\alpha_5 = 0.62$} \\
					\cline{2-5}
					& Error & OC & Error & OC  \\
					\hline
					$2^6$ & 1.50E-02 & 1.918073 & 2.42E-03 & 1.978989  \\
					\hline
					$2^7$ & 3.97E-03 & 1.967703 & 6.14E-04 & 1.993808 \\
					\hline
					$2^8$ & 1.01E-03 & 1.982828 & 1.54E-04 & 2.000700  \\
					\hline
					$2^9$ & 2.56E-04 & - & 3.85E-05 & -  \\
					\hline 
				\end{tabular}
				\caption {\emph{Error and order of convergence in $L^2(\Omega)$ norm in space for Example \ref{7E2}.}}
				\label{7T6}
			\end{center}
		\end{table}
	\end{tiny}
\end{center}

\begin{center}
	\begin{tiny}
		\begin{table}[h!]
			\begin{center}
				\begin{tabular}{|c|c|c|c|c|}
					\hline
					\multirow{3}{*}{\large $M_s$} & \multicolumn{2}{c|}{$\alpha_1 = 0.4$, $\alpha_2 = 0.37$} & \multicolumn{2}{c|}{$\alpha_1 = 0.7$, $\alpha_2 = 0.68$, $\alpha_3 = 0.66$} \\
					& \multicolumn{2}{c|}{$\alpha_3 = 0.35$, $\alpha_4 = 0.33$} & \multicolumn{2}{c|}{$\alpha_4 = 0.64$, $\alpha_5 = 0.62$} \\
					\cline{2-5}
					& Error & OC & Error & OC  \\
					\hline
					$2^6$ & 3.84E-02 & 1.077912 & 3.56E-02 & 1.001456  \\
					\hline
					$2^7$ & 1.82E-02 & 1.024152 & 1.78E-02 &  1.000383 \\
					\hline
					$2^8$ & 8.93E-03 & 1.006509 & 8.88E-03 & 1.000097 \\
					\hline
					$2^9$ & 4.45E-03 & - & 4.44E-03 & -  \\
					\hline
				\end{tabular}
				\caption {\emph {Error and order of convergence  in $H^1_0(\Omega)$ norm in space for Example \ref{7E2}.}}
				\label{7T7}
			\end{center}
		\end{table}
	\end{tiny}
\end{center}
\newpage
\begin{example}\label{7E3}
In second example, we consider the Equation~\eqref{7e1} with spatial domain $\Omega = (0,1) \times (0,1)$, $a(w)=3+\sin w$ and $f(x,t)$ is chosen such that the analytical solution of given PDE is $u(x,y,t)=(t^3+t^{\alpha_1}) (x-x^2)(y-y^2)$.
\end{example}
\noindent In Table~\ref{7T8}, we have given the order of convergence in the temporal direction in $L^{\infty}(0,T;L^2(\Omega))$ norm on a graded mesh. It can be seen from this table that using graded mesh, we obtain optimal convergence rate in time direction, as predicted by Theorem \ref{7th5}. Tables~\ref{7T9}~and~\ref{7T10} confirm the theoretical convergence estimates in $L^2(\Omega)$ \& $H^1(\Omega)$ norms in spatial direction, respectively. 
\vspace{-0.3cm}
\begin{center}
	\begin{tiny}	
		\begin{table}[h!]
			\begin{center}
				\begin{tabular}{|c|c|c|c|c|}
					\hline
				\multirow{3}{*}{\large $N$} & \multicolumn{2}{c|}{$\alpha_1 = 0.5$, $\alpha_2 = 0.47$} & \multicolumn{2}{c|}{$\alpha_1 = 0.9$, $\alpha_2 = 0.88$, $\alpha_3 = 0.86$} \\
				& \multicolumn{2}{c|}{$\alpha_3 = 0.45$, $\alpha_4 = 0.43$} & \multicolumn{2}{c|}{$\alpha_4 = 0.84$, $\alpha_5 = 0.82$} \\
				\cline{2-5}
				& Error & OC & Error & OC  \\
					\hline
					$2^4$ & 1.37E-03 & 1.943223 & 1.01E-03 & 1.991199 \\
					\hline
					$2^5$ & 3.56E-04 & 1.976023 & 2.53E-04 & 1.997810 \\
					\hline
					$2^6$ &	9.05E-05 & 1.989217 & 6.35E-05 & 1.999698 \\
					\hline
					$2^7$ & 2.28E-05 & - & 1.59E-05 & - \\
					\hline														
				\end{tabular}
			\end{center}
			\caption {\emph{Error and convergence rate in $L^{\infty}(0,T;L^2(\Omega))$norm in time for Example~\ref{7E3}.}}
			\label{7T8}
		\end{table}
	\end{tiny}
\end{center}

\begin{center}
	\begin{tiny}	
		\begin{table}[h!]
			\begin{center}
				\begin{tabular}{|c|c|c|c|c|}
					\hline
					\multirow{3}{*}{\large $M_s$} & \multicolumn{2}{c|}{$\alpha_1 = 0.5$, $\alpha_2 = 0.47$} & \multicolumn{2}{c|}{$\alpha_1 = 0.9$, $\alpha_2 = 0.88$, $\alpha_3 = 0.86$} \\
					& \multicolumn{2}{c|}{$\alpha_3 = 0.45$, $\alpha_4 = 0.43$} & \multicolumn{2}{c|}{$\alpha_4 = 0.84$, $\alpha_5 = 0.82$} \\
					\cline{2-5}
					& Error & OC & Error & OC  \\
					\hline
					$2^4$ & 1.37E-03 & 1.943223 & 1.01E-03 & 1.991199 \\
					\hline
					$2^5$ & 3.56E-04 & 1.976023 & 2.53E-04 & 1.997810 \\
					\hline
					$2^6$ & 9.05E-05 & 1.989217 & 6.35E-05 & 1.999698 \\
					\hline
					$2^7$ & 2.28E-05 & - & 1.59E-05 & - \\	
					\hline
			\end{tabular}
			\end{center}
			\caption {\emph {Error and order of convergence in $L^2(\Omega)$ norm in space for Example \ref{7E3}.}}
			\label{7T9}
		\end{table}
	\end{tiny}
\end{center}

\begin{center}
	\begin{tiny}	
		\begin{table}[h!]
			\begin{center}
				\begin{tabular}{|c|c|c|c|c|}
					\hline
					\multirow{3}{*}{\large $M_s$} & \multicolumn{2}{c|}{$\alpha_1 = 0.5$, $\alpha_2 = 0.47$} & \multicolumn{2}{c|}{$\alpha_1 = 0.9$, $\alpha_2 = 0.88$, $\alpha_3 = 0.86$} \\
					& \multicolumn{2}{c|}{$\alpha_3 = 0.45$, $\alpha_4 = 0.43$} & \multicolumn{2}{c|}{$\alpha_4 = 0.84$, $\alpha_5 = 0.82$} \\
					\cline{2-5}
					& Error & OC & Error & OC  \\
					\hline
					$2^4$ & 2.26E-02 & 1.009491 & 2.24E-02 & 1.001092 \\
					\hline
					$2^5$ & 1.12E-02 & 1.002898 & 1.12E-02 & 1.000286 \\
					\hline
					$2^6$ & 5.60E-03 & 1.000786 & 5.60E-03 & 1.000072 \\
					\hline
					$2^7$ & 2.80E-03 & - & 2.80E-03 & - \\
					\hline
			\end{tabular}
			\end{center}
			\caption {\emph {Error and order of convergence in $H^1_0(\Omega)$ norm in space for Example \ref{7E3}.}}
			\label{7T10}
		\end{table}
	\end{tiny}
\end{center}

\section{Conclusions}\label{7conclusion}
In this work, we have used $L2$-$1_{\sigma}$ scheme on a graded mesh to discretize the multi-term time-fractional derivative, which gives optimal convergence rate in case of having solution with weak singularity near $t=0$. Under certain regularity assumptions on exact solution we have derived $\alpha_1$-robust \emph{a priori} error estimates in $L^2(\Omega)$ and $H^1(\Omega)$ norms. We also have verified our theoretical estimates by considering numerical experiments. The existence-uniqueness result for the weak solution of \eqref{7e1} and the derivation of regularity assumptions (in Section~\ref{7err_est}) on exact solution $u$ are under investigation.  \\


\noindent \textbf{Declarations:}\\
\textbf{Conflict of interest-} The author declares no competing interests.



\begin{thebibliography}{20}
	
		
    \bibitem{[sp1]} S. Chaudhary, and P. J. Kundaliya, \emph{L1 scheme on graded mesh for subdiffusion equation with nonlocal diffusion term}, Math. Comput. Simul., 195(2022),119-137.
    
    \bibitem{[me]} S. Chaudhary, \emph{Finite element analysis of nonlocal coupled parabolic problem using Newton’s method}, Comput. Math. Appl., 75(3) (2018), 981-1003. 
        
    \bibitem{[sk]} S. Chaudhary, V. Srivastava, V. V. K. Srinivas Kumar, and B. Srinivasan, \emph{Finite element approximation of nonlocal parabolic problem}, Numer. Methods Partial Differ. Eq., 33 (2017), 786-313.
                              
    \bibitem{[r1]} K. Diethelm, \emph{The analysis of fractional differential equations: An application-oriented exposition using differential operators of Caputo type}, Lecture Notes in Mathematics, Springer, 2010.
    
    \bibitem{[r5]}  H. Liao, W. Mclean, and J. Zhang,  \emph{A discrete Gr\"onwall inequality with applications to numerical schemes for sub-diffusion problems}, SIAM J. Numer. Anal., 57(1) (2019), 218-237.
   
    \bibitem{[hr12]} C. Huang, and M. Stynes, \emph{Optimal spatial $H^1-norm$ analysis of a finite element method for a time-fractional diffusion equation},  J. Comput. Appl. Math., 367 (2020), 112435.
   
    \bibitem{[AAl2]} A. Alikhanov, \emph{A new difference scheme for the time fractional diffusion equation},  J. Comput. Phys., 280 (2015), 424-438.

    \bibitem{[fmr13]} H. Chen, and M. Stynes, \emph{Error analysis of a second-order method on fitted meshes for a time fractional diffusion problem},  J. Sci. Comput., 79(1) (2019), 624-647.
 
    \bibitem{[r3]} T. Gudi, \emph{Finite element method for a nonlocal problem of Kirchhoff type}, SIAM J. Numer. Anal., 50(2) (2012), 657-668.

    \bibitem{[vth]} V. Thom$\acute{e}$e, \emph{Galerkin finite element methods for parabolic problems}, Second revised and expanded ed., Springer, Berlin, 2006.

    \bibitem{[r16]} C. Huang, and M. Stynes, \emph{A sharp $\alpha$-robust $L^{\infty}(H^1)$ error bound for a time-fractional Allen-Cahn problem discretised by the Alikhanov $L2$-$1_{\sigma}$ scheme and a standard FEM}, J. Sci. Comput., 91, Article number: 43 (2022).  
    
    \bibitem{[Huang_2020]} C. Huang, and M. Stynes, \emph{Superconvergence of a finite element method for the multi-term time-fractional diffusion problem}, J. Sci. Comput., 82(1) (2020).
        
    \bibitem{[Huang_2022]} C. Huang, and M. Stynes, and H. chen, \emph{An $\alpha$-robust finite element method for a multi-term time-fractional diffusion problem}, J. Comput. Appl. Math., 389 (2021), 113334.  
    
    \bibitem{[r15]} H. Chen, and M. Stynes, \emph{Blow-up of error estimates in time-fractional initial-boundary value problems}, IMA J. Numer. Anal., 41(2) (2021), 974-997.
    
    \bibitem{[kwn]} K. Mustapha, and W. McLean, \emph{A second-order accurate numerical method for a fractional wave equation}, Numer. Math., 105 (2007), 481-510.
    
    \bibitem{[r02]} M. Stynes, E. O'Riordan, and J. Gracia, \emph{Error analysis of a finite difference method on graded meshes for a time-fractional diffusion equation}, SIAM J. Numer. Anal., 55 (2017), 1057-1079.
      
    \bibitem{[Alikhnov_2017]} G. Gao, A. Alikhanov, and Z.Sun, \emph{The temporal second order difference schemes based on the interpolation approximation for solving the time multi-term and distributed-order fractional sub-diffusion equations}, J. Sci. Comput., 73 (2017), 93-121.
  
    \bibitem{[z.Sun_2021]} R. Du, and Z. Sun, \emph{ Temporal second-order difference methods for solving multi-term time fractional mixed diffusion and wave equations},  Numer. Algor., 88 (2021), 191-226.
   
    \bibitem{[Jin_2015]} B. Jin, R. Lazarov, Y. Liu, and Z. Zhou, \emph{The Galerkin finite element method for a multi-term time-fractional diffusion equation}, J. Comput. Phys., 281 (2015), 825-843. 
   
    \bibitem{[Schumer_2003]} R. Schumer, D. Benson, M. Meerschaert, and B. Baeumer, \emph{Fractal mobile/immobile solute transport}, Water Resour. Res., 39(10) (2003), 1296.
     
    \bibitem{[Metzler_1998]} R. Metzler, J. Klafter, and I. Sokolov, \emph{Anomalous transport in external fields: continuous time random walks and fractional diffusion equations extended}, Phys. Rev. E, 58(2) (1998), 1621-1633.
    
    \bibitem{[inequalities]} G. Hardy, J. Littlewood, and G. Polya, \emph{Inequalities}, Cambridge University Press, Cambridge, 1934.
          
\end{thebibliography}
\end{document}